\def\RR{\mathbb{R}}
\def\CC{\mathbb{C}}
\def\NN{\mathbb{N}}
\newcommand{\ID}{{\mathbf{1}}}
\newcommand{\al}{{\alpha}}
\newcommand{\la}{{\lambda}}
\newcommand{\f}{{\varphi}}
\newcommand{\IM}{{\operatorname{Im}}}
\newcommand{\Pro}{{\mathbb{P}}}
\newcommand{\R}{{\mathbb  R}}
\newcommand{\Z}{{\mathbb  Z}}
\newcommand{\N}{{\mathbb  N}}
\newcommand{\C}{{\mathbb  C}}
\newcommand{\OID}{{\mathbf{I}}}
\newcommand{\fdot}{\,\cdot\,}
\def\Ddots{\mathinner{\mkern1mu\raise\p@
\vbox{\kern7\p@\hbox{.}}\mkern2mu
\raise4\p@\hbox{.}\mkern2mu\raise7\p@\hbox{.}\mkern1mu}}
\newcommand{\cH}{\mathcal{H}}
\newcommand{\p}{\mathbb{P}}
\DeclareMathOperator{\spa}{span}
\DeclareMathOperator{\Mod}{mod}
\DeclareMathOperator{\supp}{supp}
\newcommand{\ci}[1]{_{ {}_{\scriptstyle #1}}}
\newcommand{\ti}[1]{_{\scriptstyle \text{\rm #1}}}
\chardef\mathlig@atcode\count255
\def\actively#1#2{\begingroup\uccode`\~=`#2\relax\uppercase{\endgroup#1~}}
\def\mathlig@gobble{\afterassignment\mathlig@next@cmd\let\mathlig@next= }
\def\mathlig@delim{\mathlig@delim}
\def\mathlig@defcs#1{\expandafter\def\csname#1\endcsname}
\def\mathlig@let@cs#1#2{\expandafter\let\expandafter#1\csname#2\endcsname}
\def\mathlig@appendcs#1#2{\expandafter\edef\csname#1\endcsname{\csname#1\endcsname#2}}
\def\mathlig#1#2{\mathlig@checklig#1\mathlig@end\mathlig@defcs{mathlig@back@#1}{#2}\ignorespaces}
\def\mathlig@checklig#1#2\mathlig@end{%
 \expandafter\ifx\csname mathlig@forw@#1\endcsname\relax
 \expandafter\mathchardef\csname mathlig@back@#1\endcsname=\mathcode`#1%
 \mathcode`#1"8000\actively\def#1{\csname mathlig@look@#1\endcsname}%
 \mathlig@dolig#1\mathlig@delim
\fi
\mathlig@checksuffix#1#2\mathlig@end
}
\def\mathlig@checksuffix#1#2\mathlig@end{%
\ifx\mathlig@delim#2\mathlig@delim\relax\else\mathlig@checksuffix@{#1}#2\mathlig@end\fi
}
\def\mathlig@checksuffix@#1#2#3\mathlig@end{%
\expandafter\ifx\csname mathlig@forw@#1#2\endcsname\relax\mathlig@dosuffix{#1}{#2}\fi
\mathlig@checksuffix{#1#2}#3\mathlig@end
}
\def\mathlig@dosuffix#1#2{%
\mathlig@appendcs{mathlig@toks@#1}{#2}%
\mathlig@dolig{#1}{#2}\mathlig@delim
}
\def\mathlig@dolig#1#2\mathlig@delim{%
 \mathlig@defcs{mathlig@look@#1#2}{%
 \mathlig@let@cs\mathlig@next{mathlig@forw@#1#2}\futurelet\mathlig@next@tok\mathlig@next}%
 \mathlig@defcs{mathlig@forw@#1#2}{%
  \mathlig@let@cs\mathlig@next{mathlig@back@#1#2}%
  \mathlig@let@cs\checker{mathlig@chck@#1#2}%
  \mathlig@let@cs\mathligtoks{mathlig@toks@#1#2}%
  \expandafter\ifx\expandafter\mathlig@delim\mathligtoks\mathlig@delim\relax\else
  \expandafter\checker\mathligtoks\mathlig@delim\fi
  \mathlig@next
 }%
 \mathlig@defcs{mathlig@toks@#1#2}{}%
 \mathlig@defcs{mathlig@chck@#1#2}##1##2\mathlig@delim{%
  \ifx\mathlig@next@tok##1%
   \mathlig@let@cs\mathlig@next@cmd{mathlig@look@#1#2##1}\let\mathlig@next\mathlig@gobble
  \fi
  \ifx\mathlig@delim##2\mathlig@delim\relax\else
   \csname mathlig@chck@#1#2\endcsname##2\mathlig@delim
  \fi
 }%
%
 \ifx\mathlig@delim#2\mathlig@delim\else
  \mathlig@defcs{mathlig@back@#1#2}{\csname mathlig@back@#1\endcsname #2}%
 \fi
}%
\mathchardef\ordinarycolon\mathcode`\:
\def\vcentcolon{\mathrel{\mathop\ordinarycolon}}
\numberwithin{equation}{section}
\theoremstyle{plain}
\newtheorem{theo}{Theorem}[section]
\newtheorem{cor}[theo]{Corollary}
\newtheorem{lem}[theo]{Lemma}
\newtheorem{prop}[theo]{Proposition}
\theoremstyle{definition}
\newtheorem*{theorem*}{Theorem}
\theoremstyle{remark}
\theoremstyle{remark}
\newtheorem*{exs*}{Examples}
\theoremstyle{remark}
\newtheorem*{rems*}{Remarks}
\theoremstyle{remark}
\newtheorem*{rem*}{Remark}
\title[Iterated Rank-One Perturbations]{Spectral Analysis of Iterated Rank-One Perturbations}
\author{Dale~Frymark}
\address{Department of Mathematics, Stockholm University, Kr\"aftriket 6, 106 91 Stockholm, Sweden}
\email{dale@math.su.se}
\thanks{The work of Constanze Liaw was partially supported by the NSF grant DMS-1802682.}
\author{Constanze~Liaw}
\address{Department of Mathematical Sciences, University of Delware, 501 Ewing Hall, Newark, DE  19716, USA; and 
CASPER, Baylor University, One Bear Place \#97328,      
 Waco, TX  76798, USA.}
\email{liaw@udel.edu}
\keywords{Rank-One Perturbations, Random Schr\"odinger Operator, Spectral Theory, Perturbation Theory}
 \subjclass[2010]{81Q15, 47A55, 47B80, 82B44, 81Q10}
\begin{document}

\begin{abstract}
The authors study the spectral theory of self-adjoint operators that are subject to certain types of perturbations.

An iterative introduction of infinitely many randomly coupled rank-one perturbations is one of our settings. Spectral theoretic tools are developed to estimate the remaining absolutely continuous spectrum of the resulting random operators. Curious choices of the perturbation directions that depend on the previous realizations of the coupling parameters are assumed, and unitary intertwining operators are used. An application of our analysis shows localization of the random operator associated to the Rademacher potential.

Obtaining fundamental bounds on the types of spectrum under rank-one perturbation, without restriction on its direction, is another main objective. This is accomplished by analyzing Borel/Cauchy transforms centrally associated with rank-one perturbation problems. 
\end{abstract}

\maketitle

\setcounter{tocdepth}{1}
\tableofcontents

\setcounter{tocdepth}{2}

\section{Introduction}

An important branch of perturbation theory is the study of spectral properties of the sum $H+V$ of self-adjoint operators $H$ and $V$, under the assumption that the spectrum of $H$ is known and $V$ is from some operator class. The operator $V$ is often thought of as a potential. Our focus is on two classes for $V$, rank-one and certain probabilistic potentials.

Rank-one perturbations are considered a very simple type of perturbation as their range is one-dimensional. Let $T$ be a self-adjoint operator $T$ on a separable Hilbert space $\cH$, and consider the family of self-adjoint rank-one perturbations in the direction of a cyclic vector $\f\in\cH$:
\[
T_\alpha = T+\alpha \langle\fdot, \f\rangle\ci{\cH}\f,\qquad\alpha \in \R.
\]
For details concerning this definition, see equation \eqref{e-rk1} below.

More general rank-one perturbations can be defined when $H$ is unbounded, for instance through the theory of quadratic forms (see e.g.~\cite{LiawTreil1, SIMREV, Simon} and the references therein). Unbounded perturbatiuons are outside the scope of this paper, as our focus lays either on the iterative introduction of infinitely many rank-one perturbations, or on obtaining general bounds for single ones. 

A rank-one perturbation with $\f\in \cH$ is not only a compact operator. It is also Hilbert--Schmidt, trace class and even of finite rank (rank-one). Yet, their study has revealed an extremely subtle nature. For example, a description of the singular continuous spectrum of the perturbed operator $T_\alpha$ in terms of properties of the unperturbed operator $T$ is unknown, see e.g.~\cite{SIMREV} and the references within. Moreover, beyond the realms of mathematical physics and spectral analysis of self-adjoint operators, the problem of rank one perturbations is connected to many interesting topics in analysis, see e.g.~\cite{CimaMathesonRoss, t-KR, LiawTreil1, poltsara2006} and the references therein. The results in Section \ref{s-northo} contribute bounds of two types: a bound for how much absolutely continuous spectrum can be transferred to discrete spectrum, and a bound for how much mass from the discrete spectrum can be transferred to absolutely continuous spectrum via a rank-one perturbation.

As an object of great interest to mathematical physicists, Anderson-type Hamiltonians $H_\omega = H+V_\omega$ are a generalization of the discrete random Schr\"odinger operator with a probabilistic  potential $V_\omega  = \sum \omega_n (\fdot, \f_n)\ci\cH \f_n$. The perturbation problem is defined rigorously below in Subsection \ref{ss-01}. In this setup, the perturbation $V_\omega$ is almost surely a non-compact operator. As a result, none of classical perturbation theory applies.

In 1958 P.W. Anderson (Subsection \ref{ss-01}, \cite{Anderson}) suggested that sufficiently large impurities in a semi-conductor could lead to spatial localization of electrons, called Anderson localization. The field has grown into a rich theory and is studied by both the physics and the mathematics community. There are many well-studied and famous open problems in this research area, one of which is the Anderson localization conjecture for the discrete random Schr\"odinger operator at weak disorder in two spacial dimensions \cite{Anderson, CFKS, Banff} (or delocalization conjecture \cite{Liaw2}). There are numerous ways to interpret the meaning of extended states throughout the literature. The current work in Section \ref{s-infinite} is related to the localization conjectures when the existence of \emph{extended states} is defined as almost surely non-trivial absolutely continuous spectrum in the Anderson-type Hamiltonian. This is sometimes referred to as spectral delocalization. Spectral localization thus refers to an Anderson-type Hamiltonian with trivial absolutely continuous part. 
 
Although rank-one perturbations and Anderson-type Hamiltonians are opposite in a perturbation theoretic sense, they have been found to be intimately connected \cite{AbukamovPoltoLiaw, JakLast2000, JakLast2006, KingKirbyLiaw, Liaw, Sim1994, SimonWolff}. Here we present further results linking these perturbation problems. Countably many rank-one perturbations are successively applied to a self-adjoint operator, $T$, with only absolutely continuous spectrum on a separable Hilbert space $\cH$. Specifically, we utilize the Aronszajn--Donoghue theory to determine the amount by which the absolutely continuous spectrum decays with each perturbation, and explicitly compute formulas describing how the initial spectrum changes after an infinite number of such perturbations. This construction involves a curious choice of the perturbation vector at each step in order to control properties of the perturbed operators in terms of the initial operator. In the limiting case, the infinitely perturbed operator is somewhat similar to an Anderson-type Hamiltonian and can be compared to the discrete random Schr\"odinger operator.

To avoid any possible confusion, we list differences between the construction in the current project and classical Anderson-type Hamiltonians:
\begin{enumerate}
\item The main distinction is that the iterative construction requires knowledge of the previous perturbation vectors, $\f_{n}$ for $n=1,\hdots, k-1$, as well as all \emph{previous} realizations of the (random) coupling parameter in order to choose the next perturbation vector, $\f_k$. This is very different from Anderson-type Hamiltonians, where the vectors $\f_k$'s form a sequence of orthonormal vectors and are given a priori, independently from the particular realization of the Anderson-type Hamiltonian. While our limiting operator is  similar to an Anderson-type Hamiltonian, it is mainly due to the specific choice of vectors $\f_k$ that it cannot be classified as such.

\item The construction yields an operator of spectral multiplicity one. The spectral multiplicity of a general Anderson-type Hamiltonian may not necessarily equal one. In fact, not even the spectral multiplicity of the discrete random Schr\"odinger operator is known, though it is suspected to be infinite (see e.g.~\cite{JakLast2000}).

\item We start on the spectral representation side of the operator. Hence, all of Lebesgue theory can be utilized as a tool, and rank-one perturbation theory becomes more concrete. This is not a serious distinction, as a unitary transformation takes any cyclic operator to its spectral representation.
\end{enumerate}

A primary development is the explicit calculation of the remaining absolutely continuous spectrum after an infinite number of rank-one perturbations. As suggested by Example \ref{example} below, precise control for even rank-two perturbations are challenging to produce and tend to be less explicit than those for rank-one perturbations. Recently, some progress was made for finite rank perturbations \cite{LiawTreil_arXiv} using matrix-valued measures. But the nature of our construction has a  different focus.

The probability measures that can be handled include the case of Rademacher potentials; see Subsection \ref{ss-rademacher}, where we provide results with Rademacher potential. These represent a worst-case scenario for our construction.

\subsection*{Acknowledgments.} The authors thank Alexei Poltoratski for suggesting some questions which led to this paper as well as for  many insightful discussions and comments.

\subsection{Outline}

The main tools of perturbation theory from Section \ref{ss-Pert} are utilized in Section \ref{s-firstpert}, where the majority of preparatory calculations take place, including applying Aronszajn--Donoghue theory to the first perturbation. Beginning with a measure that is constant over the interval $[-1,1]$, Aronszajn--Donoghue theory says that a perturbation creates a point mass outside of $[-1,1]$ and the remaining absolutely continuous spectrum is reduced accordingly. The precise strength of the point mass is calculated, and although it is possible to explicitly find a formula for the absolutely continuous spectral measure, it is avoided here for simplicity. Section \ref{example} represents a comparison for the level of difficulty involved in computing even a rank-two perturbation. Recent developments in finite rank perturbations can be found, for example, in \cite{KapPol, LiawTreil0}.

Section \ref{example} contains a simple motivating example, which also . An overview of the constructive process is given in Section \ref{ss-diagram}. And in Section \ref{s-firstpert} we compute location and mass of the eigenvalue generated by the perturbation.

Section \ref{s-startiterate} explains the techniques involved in the iterative construction. Specifically, we fix the first perturbation parameter $\al_1$ and choose the second perturbation vector $\widetilde{\f}_2$ so we can pass via unitary equivalence from the often byzantine a.c.~spectral measure on $[-1,1]$ to an auxiliary measure with constant weight on $[-1,1]$, again. We are mainly concerned with the total mass (or total variation) of the a.c.~part of this auxiliary measure. This auxiliary measure is comparable to the starting measure and is unchanged through the spectral theorem and the unitary operator. This unitary operator and choice of the vector $\widetilde{\f}_2$ return us to the situation at the beginning of Section \ref{s-firstpert}, with a constant measure on $[-1,1]$. 

Section \ref{s-infinite} iterates this utilization of vector choices and unitary operators along with the perturbations. New perturbation directions are orthogonal to the point masses created from previous ones and therefore remain unchanged; this allows us to focus on the absolutely continuous spectrum. Results similar to those from Section \ref{s-startiterate} are produced and the process can effectively be iterated. The final formulas obtained from iteration are found in Subsections \ref{ss-fk} and \ref{ss-tau}. Subsection \ref{ss-rademacher} contains an application of the analysis to the constructed operator with Rademacher potential, where the $\al$'s are chosen to be the endpoints of the given interval, each occurring with equal probability. This operator is found to have spectral localization. These formulas are quite simple and shed further light onto the recursive nature of the process.

Section \ref{s-northo} attempts to escape the requirement of the previous construction of orthogonal perturbation vectors, and obtains results for how much absolutely continuous spectrum can be destroyed via a single rank-one perturbation. No restriction on the direction of the perturbation is made whatsoever. These obtained estimates are upper-bounds and require knowledge of how the perturbation vector interacts with the spectral measure. The goal is to bring the constructed operator closer to being an Anderson-type Hamiltonian by allowing more freedom for the choices of the vectors. Unfortunately, the estimates obtained are not sharp enough to iterate using the devised methods and further refinement is still required. However, the estimates are the first known of their kind for general perturbation theory and are of interest for those purposes as well. The methods used rely on an intimate knowledge of Aronszajn--Donoghue theory and the integral transforms involved within.

\section{Fundamentals of Perturbation Theory}\label{s-B}

\subsection{Classical Perturbation Theory}\label{ss-Pert}

In perturbation theory one seeks to answer the question:
Given some information about the spectrum of an operator $A$, what can be said about the spectrum of the operator $A+B$ when $B$ is in some operator class?
Often, the attention is restricted to which properties of the spectrum are preserved. The answer, of course, varies wildly depending on the class of operators the perturbation $B$ is taken from. The answer may also be influenced by the choice of unperturbed operator $A$. Here, we focus on self-adjoint operators $A$ and $B$.

To formulate some partial answers, we use the notation 
$$A\sim B (\Mod \text{\em Class }X)$$ 
if there exists a unitary operator $U$ such that $UAU^{-1}-B$ is an element of $\text{\em Class }X$. The $\text{\em Class }X$ can be any class of operators, e.g.~compact, trace class, or finite rank operators. 

\begin{theo}[Weyl--von Neumann, see e.g.~\cite{t-KR}]\label{t-weylvn}
The essential spectra of two self-adjoint operators $A$ and $B$ satisfy 
\begin{align*}
\sigma\ti{ess}(A)=\sigma\ti{ess}(B) \text{ if and only if } A\sim B~(\Mod \text{compact operators}).
\end{align*}
\end{theo}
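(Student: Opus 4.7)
The plan is to treat the two directions separately. The easy direction (unitary equivalence modulo compacts $\Rightarrow$ equal essential spectra) is immediate: if $UAU^{-1} - B = K$ with $K$ compact and $U$ unitary, then unitary conjugation preserves the spectrum and in particular the set of Weyl sequences, so $\sigma\ti{ess}(UAU^{-1}) = \sigma\ti{ess}(A)$; Weyl's theorem on compact perturbations then gives $\sigma\ti{ess}(UAU^{-1}) = \sigma\ti{ess}(UAU^{-1} - K) = \sigma\ti{ess}(B)$, and chaining yields $\sigma\ti{ess}(A) = \sigma\ti{ess}(B)$.

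For the harder reverse direction, I would proceed in two stages. Stage~A: reduce any single self-adjoint operator $T$ to diagonal form modulo compacts, by showing $T \sim D~(\Mod~\text{compact})$ for some diagonal operator $D$. The construction is quantitative: fix $\epsilon > 0$, partition $\RR$ into disjoint intervals $\{I_n\}$ of length less than $\epsilon$, use the spectral projections $E_T(I_n)$, and within each $\Ran E_T(I_n)$ choose an orthonormal basis; replacing the block of $T$ acting on $\Ran E_T(I_n)$ by $c_n \cdot I$ with $c_n$ the midpoint of $I_n$ yields a diagonal operator $D_\epsilon$ with $\|T - D_\epsilon\| \le \epsilon$. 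Iterating on successively refined partitions with $\epsilon_k \to 0$, and organizing the perturbations so they accumulate within the compact ideal, produces a unitary $U$, a diagonal operator $D$, and a compact $K$ with $T = UDU^{-1} + K$.

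Stage~B: match up two diagonal operators with the same essential spectrum. I use that the essential spectrum of $\operatorname{diag}(\lambda_n)$ is the set of accumulation points of $\{\lambda_n\}$ together with eigenvalues of infinite multiplicity. Applying Stage~A to both $A$ and $B$ yields diagonal operators $D_A$, $D_B$ whose diagonal sequences share the same set of accumulation points. A further permutation unitary $V$ reorders the basis of $D_A$ so that corresponding diagonal entries cluster within small neighborhoods of each common essential-spectrum point and can be paired off, with any finite discrepancy absorbed into the compact error; $VD_AV^{-1} - D_B$ is then diagonal with entries tending to zero, hence compact. Composing the three unitaries (from Stage~A on $A$ and $B$, together with $V$) and accumulating the compact terms produces a single unitary $W$ with $WAW^{-1} - B$ compact.

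Main obstacle: Stage~A is the technical heart. The spectral theorem alone does not diagonalize an operator with purely absolutely continuous spectrum in any orthonormal basis, so one cannot avoid genuinely exploiting the compact-perturbation freedom. Managing the quantitative estimates so that accumulated errors from iterative refinement actually lie in the compact ideal, rather than merely among bounded operators, is the delicate step; historically this is the content attributed to Weyl and sharpened by von Neumann to a Hilbert--Schmidt perturbation of arbitrarily small norm.
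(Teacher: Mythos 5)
The paper does not prove this statement at all: it is quoted as a classical result with a pointer to Kato's book, so there is no internal proof to compare against, and your attempt has to stand on its own. Your forward direction (unitary equivalence modulo compacts implies equal essential spectra) is correct and complete: conjugation preserves Weyl sequences, and Weyl's theorem on compact perturbations does the rest. The two-stage architecture for the converse --- diagonalize each operator modulo compacts, then intertwine two diagonal operators with the same essential spectrum by a permutation-plus-compact argument --- is exactly the standard Weyl--von Neumann(--Berg) route, and Stage~B as you sketch it (pair off diagonal entries clustering at common accumulation points, absorb finite discrepancies into a finite-rank error) can be made to work.

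The genuine gap is in Stage~A, and it is not merely a technicality you can defer. Replacing the block of $T$ on $\Ran E_T(I_n)$ by $c_n$ times the identity produces an error of operator norm at most $\epsilon$, but that error is in general \emph{not} compact: if $T$ has absolutely continuous spectrum filling $I_n$, then $(T-c_n)E_T(I_n)$ is a non-compact operator of small norm, and no refinement of the partition changes this, since at every stage your ``diagonal'' operator is only block-diagonal with infinite-dimensional blocks. Small norm does not imply membership in the compact ideal, so the phrase ``organizing the perturbations so they accumulate within the compact ideal'' is precisely the assertion to be proved, and the mechanism you describe does not supply it. The actual argument constructs a genuine orthonormal basis of approximate eigenvectors one vector at a time: take a dense sequence $\{x_j\}$, project $x_1$ onto finitely many small spectral intervals to get finitely many unit vectors $e$ with $\|(T-\lambda)e\|$ small, pass to the orthogonal complement of the vectors chosen so far (which is invariant under $T$ up to a small Hilbert--Schmidt correction), and repeat with errors $\epsilon_k$ chosen summable; the resulting $T-D$ is then Hilbert--Schmidt of arbitrarily small norm, hence compact. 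You correctly identify this as the heart of the theorem, but identifying the obstacle is not the same as overcoming it, so as written the proof of the hard direction is incomplete.
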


\begin{theo}[Kato--Rosenblum, see e.g.~\cite{t-KR}]
\label{t-KR}
If two self-adjoint operators satisfy $A\sim B (\Mod \text{trace class})$ then their absolutely continuous parts are unitarily equivalent: $A\ti{ac}\sim B\ti{ac}$.
\end{theo}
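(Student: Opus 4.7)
The plan is to construct the wave operators and show they implement the desired unitary equivalence. The hypothesis $A\sim B\,(\Mod\text{trace class})$ supplies a unitary $U$ with $UAU^{-1}-B$ trace class, and since $A\ti{ac}$ is unitarily equivalent to $(UAU^{-1})\ti{ac}$ via $U$, I may replace $A$ by $UAU^{-1}$ at the outset and assume that $V:=A-B$ is itself a trace class self-adjoint operator. Define
\begin{equation*}
W_\pm(A,B) := \text{s-}\lim_{t\to\pm\infty} e^{itA}\,e^{-itB}\, P\ti{ac}(B),
\end{equation*}
and aim to prove three things: existence, the intertwining relation $AW_\pm=W_\pm B$, and completeness $\Ran W_\pm=P\ti{ac}(A)\cH$. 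The intertwining is formal once existence is known, and it forces $W_\pm$ to be partial isometries from $P\ti{ac}(B)\cH$ onto their ranges inside $P\ti{ac}(A)\cH$; combined with completeness this immediately yields $A\ti{ac}\sim B\ti{ac}$.

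The technical heart is Cook's method. Working on the dense set $\mathcal{D}\subset P\ti{ac}(B)\cH$ of vectors $\psi$ whose spectral density $d\mu\ci{B,\psi}/d\lambda$ is bounded and compactly supported, I would use
\begin{equation*}
e^{itA}e^{-itB}\psi - \psi = i\int_0^t e^{isA}\,V\,e^{-isB}\psi\,ds,
\end{equation*}
so that existence reduces to showing $\int_0^{\pm\infty}\|V e^{-isB}\psi\|\,ds<\infty$. Diagonalizing the trace class operator as $V=\sum_n\lambda_n\langle\fdot,\phi_n\rangle\phi_n$ with $\sum_n|\lambda_n|<\infty$, the key analytic input is a Rosenblum-type $L^2$ estimate
\begin{equation*}
\int_{-\infty}^\infty |\langle e^{-isB}\psi,\phi\rangle|^2\,ds \;\le\; 2\pi\,\|\phi\|^2\,\bigl\|d\mu\ci{B,\psi}/d\lambda\bigr\|_\infty,
\end{equation*}
which follows from Plancherel applied to the Fourier transform of the (compactly supported, bounded) spectral density. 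Combining this bound with a Cauchy--Schwarz split against $(\lambda_n)\in\ell^1$ and summing over $n$ yields the required $L^1$-in-time control, hence the existence of $W_\pm(A,B)$ on $\mathcal{D}$, and then on all of $P\ti{ac}(B)\cH$ by density together with the uniform-in-$t$ contractivity of $e^{itA}e^{-itB}$.

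Completeness follows from the symmetry of the hypothesis. Since $B-A=-V$ is again trace class, the same argument produces $W_\pm(B,A)$; the chain rule for wave operators then identifies $W_\pm(B,A)$ with the adjoint of $W_\pm(A,B)$ on the respective absolutely continuous subspaces, giving $\Ran W_\pm(A,B)=P\ti{ac}(A)\cH$. Together with the intertwining relation and the isometry on $P\ti{ac}(B)\cH$, this furnishes the unitary equivalence of $A\ti{ac}$ and $B\ti{ac}$ implemented by $W_+$ (or $W_-$).

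The main obstacle I expect is the passage from the natural $L^2$ Plancherel bound to the $L^1$ integrability demanded by Cook: the trade-off consumes $\ell^1$-summability of the trace-class eigenvalues in an essentially tight way, which is the structural reason the theorem fails for Hilbert--Schmidt (let alone compact) perturbations, entirely consistent with the sharpness of Weyl--von Neumann quoted just above. A secondary technical point is verifying that $\mathcal{D}$ is genuinely dense in $P\ti{ac}(B)\cH$ and that the extension from $\mathcal{D}$ to the whole absolutely continuous subspace is uniform in $t$; this is standard via spectral truncations and functional calculus but must be checked to legitimize the interchange of limits.
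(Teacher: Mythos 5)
First, note that the paper itself offers no proof of this statement: Kato--Rosenblum is quoted as a classical theorem with a pointer to \cite{t-KR}, so your proposal is being measured against the standard literature proof rather than anything in the text. Your overall architecture --- wave operators $W_\pm(A,B)$, the Rosenblum $L^2$-in-time estimate via Plancherel on the spectral density, intertwining, and completeness by symmetry of the trace-class hypothesis --- is exactly the right skeleton.

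The genuine gap is the central analytic step: Cook's method cannot be made to work here, and the ``Cauchy--Schwarz split against $(\lambda_n)\in\ell^1$'' does not repair it. For a single rank-one summand $\lambda_n\langle\,\cdot\,,\phi_n\rangle\phi_n$ you need $\int_0^{\infty}|\langle e^{-isB}\psi,\phi_n\rangle|\,ds<\infty$, but the Rosenblum lemma only puts this function of $s$ in $L^2(\RR)$, and an $L^2$ function on a half-line need not be $L^1$; summing over $n$ with $\ell^1$ weights is a sum of (generically) divergent integrals. Nor can you escape by shrinking the dense set $\mathcal{D}$: the relevant density is the cross density $d\langle E_\lambda\psi,\phi_n\rangle/d\lambda$, which is only in $L^2$ no matter how smooth you make $d\mu\ci{B,\psi}/d\lambda$, because the $\phi_n$ are arbitrary vectors (its Fourier transform behaves like $\sin(s)/s$ in the model case, which is not integrable). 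This is precisely why Kato--Rosenblum is harder than short-range scattering, where dispersive decay supplies the $L^1$ control Cook demands. The classical repair is to abandon the $L^1$ estimate entirely and instead bound $\|(W(t)-W(s))\psi\|^2$ directly: writing this as an integral of $\langle Ce^{-iuB}\psi, e^{-iuA}(W(t)-W(s))\psi\rangle$ over $u\in[s,t]$ and applying Cauchy--Schwarz \emph{in the time variable}, one factor becomes the tail of the convergent Rosenblum integral (hence small), while the other is controlled uniformly; carrying this out rigorously requires an additional reduction (finite-rank approximation of $C$ in trace norm plus a careful treatment of the second factor, or alternatively Pearson's theorem / the stationary factorization $C=\Gamma^*\Gamma$). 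Until that replacement is made, the existence of $W_\pm$ --- and hence the whole proof --- is not established. The completeness-by-symmetry and intertwining portions of your argument are fine once existence is in hand.
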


\begin{rem*}
For self-adjoint $A$ and $B$, Carey and Pincus \cite{CP} found a complete characterization of when $A\sim B~(\Mod \text{\em trace class})$
in terms of the operators' spectrum. Of course, they must have unitarily equivalent absolutely continuous parts. Outside the continuous spectrum, they are only allowed discrete parts. And the discrete eigenvalues of $A$ and $B$ (counting multiplicity) must fall into three categories: (i) those eigenvalues of $A$ with distances from the joint continuous spectrum having finite sum (i.e.~are trace class), (ii) those eigenvalues of $B$ with distances from the joint continuous spectrum having finite sum, and (iii) eigenvalues of $A$ and $B$ that can be matched up so that their differences have finite sum.
\end{rem*}

\subsection{Introducing Rank-One Perturbations and the Spectral Theorem}\label{ss-ROST}

We focus our attention on when the perturbation class $\text{\em Class }X$ consists of self-adjoint operators with one-dimensional range (rank-one). Let $T$ be a self-adjoint operator (bounded or unbounded) on a separable Hilbert space $\cH$. The operator $T$ will be called cyclic when it possesses a vector $\f$ such that
\begin{align}\label{e-cyclicity}
\cH = \overline{\spa\{(T-\lambda{\bf I})^{-1}\f: \lambda\in\CC\backslash\RR\}},
\end{align}
where the closure is taken with respect to the Hilbert space norm. In this case, the vector $\f$ is also called \emph{cyclic}. Here we take $\f\in \cH$. All {\em rank-one perturbations of a self-adjoint operator $T$ by the cyclic vector $\f$} are given by
\begin{align}\label{e-rk1}
T_{\alpha} = T + \alpha \langle \fdot, \f \rangle\ci{\cH}\f
\qquad \text{for}\qquad
\alpha \in \RR.
\end{align}

The supposition that $T$ is cyclic is not a restriction, as otherwise we simply decompose $\cH = \cH_1\oplus \cH_2$ such that $\f$ is cyclic for $T$ on $\cH_1$ and $T$ is left unchanged by the perturbation when restricted to $\cH_2$.

It is worth emphasizing that Theorems \ref{t-KR} and \ref{t-weylvn} can be applied to rank-one perturbations, as such perturbations are both trace class and compact. 

As simple consequence of the resolvent formula one can see $\f$ is also a cyclic vector of the operator $T_{\alpha}$ for all $\alpha\in \R$, see \cite{AbukamovPoltoLiaw, LiawTreil0} for more about cyclicity.
The spectral measure of $T_{\alpha}$ with respect to the cyclic vector $\f$ will be denoted by $\mu_{\alpha}$. Explicitly, the spectral theorem defines $\mu_\alpha$ via 
$$\langle(T_{\alpha}-z\OID)^{-1}\f,\f\rangle\ci{\cH}=\int_\R \frac{d\mu_\alpha(t)}{t-z}\qquad\text{for all }z\in \C\backslash\R.$$
In other words, $T_\alpha$ is unitarily equivalent to multiplication by the independent variable on an $L^2(\mu_{\al})$ space with non-negative Radon measure $\mu_{\al}$, the \emph{spectral measure}, supported on $\R$. The spectral measure of the unperturbed operator $T$, $\mu_0$, is often used as a comparison to the spectral measure $\mu_{\alpha}$. Therefore, we use the convention that $\mu_0=\mu$ for simplicity. This means that $T$ can be written as $M_t$, multiplication by the independent variable on $L^2(\mu)$. The vector $\f$ is then represented by the function that is identically equal to the constant function one on $L^2(\mu)$. 

The spectral theorem now translates the rank-one perturbation problem to
\begin{equation}
\label{d-rk1}
\widetilde{T}_{\alpha}=M_t+\al\langle\fdot, {\bf 1}\rangle\ci{L^2({\mu})}{\bf 1}.
\end{equation} 
Therefore, we identify $\cH=L^2(\mu)$ and use $\widetilde{T}_{\al}=T_{\al}$ for brevity of notation. The presence of a different unitary intertwining operator relating the operators $T_{\al}$ and $M_s$, on their respective spaces $\cH=L^2(\mu)$ and $L^2(\mu_{\al})$, begs the question whether we can capture this unitary intertwining operator. This question was answered in a paper of Liaw and Treil \cite[Theorem 2.1]{LiawTreil1}. The theorem extends to all of $L^2(\mu)$, see  \cite[Theorem 3.2]{LiawTreil1}, but a simpler version is presented here.
\begin{theo}[Representation Theorem]\label{t-repthm}
The spectral representation $V_{\al}:L^2(\mu)\to L^2(\mu_{\al})$ of $T_{\al}$ acts by 
$
V_{\al}f(s)=f(s)-\al\int\ci\R\dfrac{f(s)-f(t)}{s-t}d\mu(t)
$ for compactly supported $C^1$ functions $f$.
\end{theo}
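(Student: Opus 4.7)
The plan is to verify that the formula defines the spectral representation of $T_\alpha$ with respect to the cyclic vector $\mathbf 1 \in L^2(\mu)$ by checking the two properties that pin it down uniquely: the intertwining relation $V_\alpha T_\alpha = M_s V_\alpha$ and the normalization $V_\alpha \mathbf 1 = \mathbf 1$. The normalization is immediate, since for $f \equiv 1$ the difference quotient $(1-1)/(s-t)$ vanishes. As noted after \eqref{e-rk1}, $\mathbf 1$ remains cyclic for $T_\alpha$, so these two properties determine such an operator uniquely on a dense subspace.

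For the intertwining, I would take $f \in C^1_c(\R)$ and set $c := \alpha \int f\, d\mu$, so that $(T_\alpha f)(t) = tf(t) + c$. The formula defining $V_\alpha$ extends naturally to this slightly enlarged class, the $C^1$ regularity taming the apparent singularity of the kernel at $s=t$. Subtracting then gives $V_\alpha(T_\alpha f)(s) - s\, V_\alpha f(s) = c - \alpha \int \frac{(sf(s) - tf(t)) - s(f(s) - f(t))}{s-t}\, d\mu(t),$ and the elementary identity $(sf(s) - tf(t)) - s(f(s) - f(t)) = (s-t)f(t)$ collapses the integral to $\alpha \int f\, d\mu = c$, cancelling the constant term exactly.

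To identify $V_\alpha$ with the spectral representation and promote it to a unitary on all of $L^2(\mu)$, I would next evaluate $V_\alpha$ on the Cauchy kernels $f_z(t) := (t-z)^{-1}$ for $z \in \C \setminus \R$. The partial fraction $[(s-z)^{-1} - (t-z)^{-1}]/(s-t) = -[(s-z)(t-z)]^{-1}$ collapses the defining integral and yields $V_\alpha f_z(s) = (1 + \alpha F(z))/(s-z)$, where $F(z) = \int (t-z)^{-1}\, d\mu(t)$ is the Cauchy transform of $\mu$. On the other hand, a short Sherman--Morrison computation gives $(T_\alpha - z)^{-1} \mathbf 1 = (1 + \alpha F(z))^{-1} f_z$ in $L^2(\mu)$, whereas in the spectral picture this resolvent vector is represented by $(s-z)^{-1} \in L^2(\mu_\alpha)$. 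Hence $V_\alpha$ agrees with the spectral representation on $\{f_z : z \in \C\setminus\R\}$, a family which is dense in $L^2(\mu)$ by the cyclicity hypothesis \eqref{e-cyclicity}, and $V_\alpha$ therefore extends uniquely to the claimed unitary.

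The main obstacle I foresee is not conceptual but notational: one must justify applying the pointwise formula to functions just outside $C^1_c$ (in particular to $T_\alpha f$, which is $C^1$ but no longer compactly supported because of the additive constant $c$), and verify that the ensuing $\mu$-integrals converge absolutely. Once this routine bookkeeping is done, the verification is purely algebraic and the final density argument is standard.
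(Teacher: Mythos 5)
First, note that the paper does not prove Theorem \ref{t-repthm} at all: it is imported verbatim from Liaw--Treil \cite[Theorem 2.1]{LiawTreil1}, so the comparison here is really against that source. Your argument is essentially the standard one from that circle of ideas, and the algebra is all correct: the normalization $V_\alpha\ID=\ID$, the telescoping identity $(sf(s)-tf(t))-s(f(s)-f(t))=(s-t)f(t)$ that gives the intertwining, the partial-fraction collapse $V_\alpha f_z=(1+\alpha F(z))f_z$, and the Aronszajn--Krein/Sherman--Morrison computation $(T_\alpha-z)^{-1}\ID=(1+\alpha F(z))^{-1}f_z$, which together show that the formula agrees with the spectral representation on $\spa\{f_z:z\in\C\setminus\R\}$.

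The genuine gap is the last step, which you dismiss as notational bookkeeping. Agreement of the two operators on $\spa\{f_z\}$ determines the spectral representation $W$ as a unitary on all of $L^2(\mu)$, but it does \emph{not} yet show that $Wf$ is given by the displayed singular-integral formula for an arbitrary $f\in C^1_c$: such an $f$ is generally not a finite linear combination of Cauchy kernels, and your operator $V_\alpha$, defined only by a pointwise integral formula on $C^1_c$, is not known a priori to be bounded or even closable, so you cannot pass from the kernel span to $C^1_c$ by continuity for free. The intertwining relation on $C^1_c$ does not rescue this either, since $C^1_c$ is not invariant under $T_\alpha$ (as you note, $T_\alpha f$ picks up the constant $c$) and an unbounded densely defined map can intertwine without being the restriction of the unitary. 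Closing the gap requires an actual approximation argument --- e.g.\ approximating $f\in C^1_c$ by combinations of Cauchy kernels in a topology strong enough to control the difference-quotient integral, or verifying the identity weakly by pairing $Wf$ and $V_\alpha f$ against the dense family $(s-\bar z)^{-1}$ in $L^2(\mu_\alpha)$ and invoking $F_\alpha=F/(1+\alpha F)$ together with Fubini. This is precisely where the analytic content of \cite[Theorem 2.1]{LiawTreil1} lives, so it cannot be waved away; with that step supplied, your proof is complete and coincides with the cited one.
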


\subsection{The Borel Transform and Rank-One Perturbation Theory}\label{ss-ROT}

A review of rank-one perturbation theory requires a subtle description of spectral measures and their various decompositions. A study of the integral operators involved will be central to the analysis of Aronszajn--Donoghue theory in Section \ref{s-northo}. Let $\mu$ be a positive measure on $[a,\infty)$ for some $a>-\infty$ with
\begin{align}\label{d-preborel}
\int\dfrac{d\mu(\lambda)}{|\lambda|+1}<\infty.
\end{align}
This assumption is somewhat restrictive, but is necessary for the study of Borel transforms. The condition that the support of $\mu$ is bounded below can be relaxed somewhat, but does hold in the current applications, and simplifies further details slightly.

Adherence to \eqref{d-preborel} allows us to define the \emph{Borel transform} of $\mu$ as
\begin{align*}
F(z):=\int_{\RR}\dfrac{d{\mu}(\la)}{\la -z}\qquad (z\in \C\backslash (\supp\mu)).
\end{align*} 
Indeed, boundary values of $F(z)$, as $z=x+i\epsilon$ approaches points $x$ in the support of $\mu$, are the primary instrument to discern spectral properties of $\mu$. See \cite{CimaMathesonRoss, LiawTreil1, poltsara2006, Simon} for a more detailed discussion (the Cauchy transform, a close relative, is often used).

The auxiliary transform
\begin{align*}
G(x):=\int_{\RR}\dfrac{d\mu(y)}{(y-x)^2}\qquad (x\in \R\backslash (\supp\mu)),
\end{align*}
captures some properties of the derivative (with respect to $z$) of the Borel transform as $z$ approaches the real axis, whereby it also plays a central role.

Let $w\in L^1\ti{loc}(\RR)$ denote the Radon--Nikodym derivative of $\mu$ with respect to Lebesgue measure. With this, the Lebesgue/Radon--Nikodym decomposition is given by $d\mu = w(x) dx + d\mu\ti{s}$. The unitary equivalence between $T$ on $\cH$ and $M_t$ on $L^2(\mu)$ involves a unitary intertwining operator, which gives rise to the corresponding orthogonal components of the operator $T = T\ti{ac}\oplus T\ti{s}$.
The singular part can be further decomposed into singular continuous $\mu\ti{sc}$ and pure point $\mu\ti{pp}$ parts. Here, $\mu\ti{pp}$ consists of point masses at the eigenvalues of $T$ and $\mu\ti{sc}=\mu\ti{s}-\mu\ti{pp}$. The spectrum is denoted by $\sigma(T)$ and is the (closed) $\supp(\mu)$. The set of all real numbers $x$ that are isolated eigenvalues of finite multiplicity for $T$ is defined to be the discrete spectrum, denoted $\sigma\ti{d}(T)$. The essential spectrum of $T$ is the complement of the discrete spectrum, denoted $\sigma\ti{ess}(T)=\sigma(T)\backslash\sigma\ti{d}(T)$.

Historically, the following theorem emerged from the question of changing boundary conditions in a Sturm--Liouville operator \cite{Aron, Dono}. From a theoretical perspective, the theorem characterizes the perturbed operator's pure point and absolutely continuous spectra. The result will be heavily used in later sections.

\begin{theo}[Aronszajn--Donoghue, see e.g.~\cite{Simon}]
\label{t-AD}
For $\al\neq 0$,
define
$$S_\al=\left\{x\in\RR ~|~ F(x+i0)=-\al^{-1}; G(x)=\infty\right\},$$
$$P_\al=\left\{x\in\RR ~|~ F(x+i0)=-\al^{-1}; G(x)<\infty\right\},$$
$$L=\left\{x\in\RR ~|~ \IM~F(x+i0)\neq 0\right\}.$$
Then we have
\begin{enumerate}
\item $\left\{S_{\al}\right\}_{\al\neq 0}$, $\left\{P_{\al}\right\}_{\al\neq 0}$ and $L$ are mutually disjoint.
\item $P_{\al}$ is the set of eigenvalues of $A_{\al}$. In fact,
$$\left(d\mu_{\al}\right)\ti{pp}(x)=\sum_{x_n\in P_{\al}}\dfrac{1}{\al^2 G(x_n)}\delta(x-x_n).$$
\item $\left(d\mu_{\al}\right)\ti{ac}$ is supported on $L$, $\left(d\mu_{\al}\right)\ti{sc}$ is supported on $S_{\al}$.
\item For $\al\neq\beta$, $\left(d\mu_{\al}\right)\ti{s}$ and $\left(d\mu_{\beta}\right)\ti{s}$ are mutually singular.
\end{enumerate}
\end{theo}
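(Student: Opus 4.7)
The plan is to base everything on the fundamental resolvent identity for rank-one perturbations, which translates to the Möbius-type relation
\[
F_\alpha(z) \;=\; \frac{F(z)}{1+\alpha F(z)}, \qquad z\in\C\setminus\R,
\]
between the Borel transforms of $\mu_\alpha$ and $\mu$. This identity (obtainable from the second resolvent formula applied to \eqref{d-rk1}, using that $\langle(T-z)^{-1}\mathbf 1,\mathbf 1\rangle=F(z)$) will be the only analytic input; the four claims then follow from boundary-value considerations.

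Part (1) is a direct check. On $L$ the boundary value $F(x+i0)$ is non-real, hence cannot equal the real number $-\alpha^{-1}$, so $L$ is disjoint from every $P_\alpha$ and $S_\alpha$. The sets $P_\alpha$ and $S_\alpha$ are disjoint by construction (finite versus infinite $G(x)$). For $\alpha\ne\beta$ one has $-\alpha^{-1}\ne-\beta^{-1}$, so the defining equalities are incompatible. Part (4) is then an immediate corollary once (2) and (3) identify the singular support of $\mu_\alpha$ as $P_\alpha\cup S_\alpha$.

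For part (3), I would first separate the absolutely continuous part. On $L$, a computation gives
\[
\IM F_\alpha(x+i0) \;=\; \frac{\IM F(x+i0)}{\,|1+\alpha F(x+i0)|^2},
\]
which is finite and strictly positive; hence by Fatou's theorem $\mu_\alpha$ has density $\pi^{-1}\IM F_\alpha(x+i0)$ with respect to Lebesgue measure exactly on $L$. The singular part $(d\mu_\alpha)_{\text{s}}$ must therefore concentrate where $\IM F_\alpha(x+i0)=+\infty$, i.e.\ where the denominator $1+\alpha F(x+i0)$ vanishes, giving $F(x+i0)=-\alpha^{-1}$. This leaves only $P_\alpha\cup S_\alpha$ as possible locations, which splits the singular part into pure point and singular continuous according to (2).

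The main work is part (2). Point masses of $\mu_\alpha$ are detected by
\[
\mu_\alpha(\{x_0\}) \;=\; \lim_{\varepsilon\to 0^+}(-i\varepsilon)\,F_\alpha(x_0+i\varepsilon).
\]
Plugging in $F_\alpha=F/(1+\alpha F)$ and using, for $x_0\in P_\alpha$, the first-order expansion
\[
F(x_0+i\varepsilon) \;=\; -\alpha^{-1}\;+\;i\varepsilon\, G(x_0)\;+\;o(\varepsilon),
\]
the numerator tends to $i\varepsilon/\alpha$, the denominator equals $i\alpha\varepsilon G(x_0)+o(\varepsilon)$, and the ratio yields the advertised mass $1/(\alpha^2 G(x_0))$. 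On $S_\alpha$ the same limit vanishes (since $G(x_0)=\infty$), so those points support $(d\mu_\alpha)_{\text{sc}}$ rather than atoms. The main technical obstacle will be justifying the expansion above rigorously at boundary points of analyticity: one needs that finiteness of $G(x_0)=\int(y-x_0)^{-2}d\mu(y)$ implies both existence of the boundary value $F(x_0+i0)$ and differentiability of $F$ from the upper half-plane in the sense needed, with $\partial_z F(x_0+i0)=G(x_0)$. This is the standard connection between finiteness of the Poisson-type integral $G(x_0)$ and $C^1$ behavior of $F$ at $x_0$, and will be the one nontrivial ingredient to spell out.
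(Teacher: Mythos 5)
The paper states this theorem as a classical citation (to Simon's survey and the Aronszajn--Donoghue originals) and gives no proof of its own, so there is nothing internal to compare against. Your sketch is the standard argument from the cited literature --- the Aronszajn--Krein relation $F_\alpha=F/(1+\alpha F)$ combined with boundary-value/Fatou theory and the $C^1$ expansion $F(x_0+i\varepsilon)=-\alpha^{-1}+i\varepsilon G(x_0)+o(\varepsilon)$ at points with $G(x_0)<\infty$ --- and it is correct, with the one genuinely technical ingredient (existence and differentiability of the boundary value when $G(x_0)<\infty$, and the vanishing of the atomic mass when $G(x_0)=\infty$) properly identified rather than glossed over.
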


The case $\al=\infty$ is known as infinite coupling, and was treated by Gesztesy and Simon, see e.g.~\cite{GS, Simon}.
The last part of the result says that the singular part of rank-one perturbations must move when the perturbation parameter $\alpha$ is changed. A description of the singular continuous spectrum is still outstanding. In fact, the `minimal' support of $\left(\mu_{\al}\right)\ti{sc}$ is not known, see e.g.~\cite{CimaMathesonRoss, LiawTreil1, Simon}, let alone a characterization of $\left(\mu_{\al}\right)\ti{sc}$.

The absolutely continuous part of the perturbed operator, $(\mu_{\al})\ti{ac}$, can be explicitly computed using the following Lemma. 
\begin{lem}[see e.g.~\cite{Simon}]
\label{l-SIM}
Let $F(z)$ be the Borel transform of a measure $\mu$ obeying \eqref{d-preborel}. Let $x\in\RR$ and $\displaystyle x+i0=\lim_{\beta\downarrow 0}(x+i\beta)$. Then we have
\[\IM~F\ci{\alpha}(z)=\dfrac{\IM~F(z)}{|1+\al F(z)|^2} \qquad\text{and} \qquad (d\mu_{\al})\ti{ac}(x)=\pi^{-1}\IM~F_{\al}(x+i0)dx.
\]
\end{lem}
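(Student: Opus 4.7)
The plan is to first derive the well-known Aronszajn resolvent identity
\[
F_\alpha(z) = \frac{F(z)}{1+\alpha F(z)}, \qquad z\in\C\setminus\R,
\]
and then extract its imaginary part, after which the second claim reduces to the standard Stieltjes boundary-value theorem for Borel (Herglotz) transforms of positive measures.

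First I would establish the resolvent identity. Since $T_\alpha = T + \alpha\langle\fdot,\f\rangle\f$, the second resolvent formula gives
\[
(T_\alpha - z\OID)^{-1}\f = (T - z\OID)^{-1}\f - \alpha\bigl\langle(T_\alpha - z\OID)^{-1}\f,\f\bigr\rangle\,(T - z\OID)^{-1}\f.
\]
Pairing with $\f$, using $\|\f\|^2 = \mu(\R)$ and the definitions of $F$ and $F_\alpha$ provided by the spectral theorem in Subsection \ref{ss-ROST}, yields $F_\alpha(z) = F(z) - \alpha F_\alpha(z) F(z)$, which rearranges to the Aronszajn identity above. (One must note that $1+\alpha F(z)\neq 0$ for $z\in\C\setminus\R$, which follows from $\IM F(z)$ having the same sign as $\IM z$; this will be exploited in the next step as well.)

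Second, I would take the imaginary part. Multiplying numerator and denominator of $F/(1+\alpha F)$ by $\overline{1+\alpha F}$ and using that $\alpha\in\R$ gives
\[
F_\alpha(z) = \frac{F(z)\bigl(1+\alpha\overline{F(z)}\bigr)}{|1+\alpha F(z)|^2},
\]
and since $\IM\bigl(F(z)+\alpha|F(z)|^2\bigr) = \IM F(z)$, the first claimed identity follows.

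Finally, for the identification of $(d\mu_\alpha)_{ac}$, I would invoke the Stieltjes inversion/boundary-value theorem: for the Borel transform of any positive measure $\nu$ obeying \eqref{d-preborel}, the nontangential boundary value $\IM F_\nu(x+i0)$ exists for a.e.\ $x\in\R$ and equals $\pi\,(d\nu/dx)(x)$, i.e.\ $\pi$ times the Radon--Nikodym derivative of $\nu_{\text{ac}}$. Applying this to $\nu = \mu_\alpha$ gives $(d\mu_\alpha)_{ac}(x) = \pi^{-1}\IM F_\alpha(x+i0)\,dx$. The main obstacle is conceptual rather than computational: one must be comfortable quoting (or briefly recalling) the Fatou/de~la~Vall\'ee~Poussin theorem that guarantees existence of a.e.\ boundary values for Herglotz functions and identifies them with the absolutely continuous density; once this is admitted, everything else is algebraic. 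This is precisely why the statement is cited from \cite{Simon}.
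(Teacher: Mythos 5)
The paper does not prove this lemma at all --- it is quoted verbatim from the literature (``see e.g.~\cite{Simon}'') --- so there is no in-paper argument to compare against; your proposal supplies exactly the standard proof from the cited source: the second resolvent identity applied to $\f$ yields the Aronszajn--Krein relation $F_\alpha=F/(1+\alpha F)$, taking imaginary parts gives the first formula, and Stieltjes inversion (Fatou's theorem for Herglotz functions) gives the second. The argument is correct; the only cosmetic points are that the remark $\|\f\|^2=\mu(\R)$ plays no role in the pairing step, and one should note (trivially, since $\f\in\cH$ makes $\mu_\alpha$ a finite measure) that $\mu_\alpha$ itself satisfies \eqref{d-preborel} before invoking the boundary-value theorem for it.
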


In the case of purely singular measures the following theorem resembles a characterization for $A\sim B (\Mod \text{\em rank-one})$.
\begin{theo}[Poltoratski \cite{Polto2}, also see 
\cite{NPPOL}]\label{t-Polt}
Let $X\subset\R$ be closed. By $I_1=(x_1;y_1), I_2=(x_2;y_2), \hdots$ denote disjoint open intervals such that $X=\R\backslash \bigcup I_n$. Let $A$ and $B$ be two cyclic self-adjoint completely non-equivalent operators with purely singular spectrum. Suppose $\sigma(A)=\sigma(B)=X$ and assume  $\sigma\ti{pp}(A)\cap\{x_1,y_1, x_2, y_2, \hdots\}=\sigma\ti{pp}(B)\cap\{x_1,y_1, x_2, y_2, \hdots\}=\varnothing.$ Then we have $$A\sim B (\Mod \text{\em rank-one}).$$
\end{theo}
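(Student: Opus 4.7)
The plan is to reduce, via the Spectral Theorem and the Krein resolvent formula, to a measure-matching problem, and then invoke Aronszajn--Donoghue theory for $\mu_B$ to realize the match. Realize $A$ as $M_t$ on $L^2(\mu_A)$ and $B$ as $M_s$ on $L^2(\mu_B)$, both cyclic with the constant vector $\mathbf{1}$, where $\mu_A,\mu_B$ are purely singular with $\supp\mu_A=\supp\mu_B=X$. The assumption that $A$ and $B$ are completely non-equivalent amounts to mutual singularity $\mu_A\perp\mu_B$. The target reduces to finding a cyclic vector $\chi\in L^2(\mu_A)$ and a real $\al$ such that the rank-one perturbation $A_{\al,\chi}:=M_t+\al\langle\fdot,\chi\rangle_{L^2(\mu_A)}\chi$ has spectral measure $\mu_B$ with respect to $\chi$; then the map $p(M_t)\chi\mapsto p(M_s)\mathbf{1}$ extends to a unitary $U:L^2(\mu_A)\to L^2(\mu_B)$ (given explicitly by Theorem \ref{t-repthm}) intertwining $A_{\al,\chi}$ with $B$, whence $UAU^{-1}-B=-\al\langle\fdot,\mathbf{1}\rangle_{L^2(\mu_B)}\mathbf{1}$ is rank one.

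By the Krein resolvent formula, which strengthens Lemma \ref{l-SIM}, the Borel transform of $A_{\al,\chi}$ relative to $\chi$ equals $F_\tau/(1+\al F_\tau)$, where $\tau:=|\chi|^2\,d\mu_A$ is the spectral measure of $A$ relative to $\chi$. Setting this equal to $F_{\mu_B}$ and solving gives
\begin{equation*}
F_\tau(z)=\frac{F_{\mu_B}(z)}{1-\al F_{\mu_B}(z)},
\end{equation*}
which is precisely the Borel transform of $\tau_\al$, the rank-one perturbation of $\mu_B$ with parameter $-\al$. Consequently, the task reduces to finding $\al\in\R$ for which $\tau_\al$ is mutually absolutely continuous with $\mu_A$; one then defines $\chi:=\sqrt{d\tau_\al/d\mu_A}\in L^2(\mu_A)$, which is cyclic for $M_t$ because $\chi\neq 0$ $\mu_A$-a.e., and $\chi\in L^2(\mu_A)$ because $\tau_\al$ is a finite measure.

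The main obstacle is producing this matching $\al$, and it is where all the hypotheses enter. The family $\{\tau_\al\}_{\al\in\R}$ is the Aronszajn--Donoghue orbit of $\mu_B$: a pairwise mutually singular family of purely singular measures (Theorem \ref{t-AD}(4)), each with measure-theoretic support given by the level set $\{x:F_{\mu_B}(x+i0)=1/\al\}$, decomposed by Theorem \ref{t-AD}(2)--(3) via the finiteness of $G_{\mu_B}$ into pure point and singularly continuous parts. The hypothesis $\supp\mu_A=\supp\mu_B=X$ confines each candidate $\tau_\al$ to $X$; the endpoint condition prevents $F_{\mu_B}(x+i0)$ from having pathological boundary behaviour on the gap endpoints $\{x_j,y_j\}$ that would obstruct $\al$-variation of these level sets; and $\mu_A\perp\mu_B$ places the measure-theoretic support of $\mu_A$ onto an Aronszajn--Donoghue level set of $F_{\mu_B}$ corresponding to some nonzero $\al$. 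Assembling these ingredients into a concrete choice of $\al$ is the technical heart of the argument, essentially realizing $\mu_A$ and $\mu_B$ as two elements of the same rank-one orbit generated by the Herglotz function $F_{\mu_B}$; once $\al$ is secured, the rest of the construction (cyclicity, equality of spectral measures, rank-one-ness of $UAU^{-1}-B$) is routine.
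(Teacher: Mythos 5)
The paper states Theorem \ref{t-Polt} as a quoted result of Poltoratski \cite{Polto2} and supplies no proof of its own, so your proposal has to stand on its own merits; it does not. The reduction in your first paragraph is fine as bookkeeping: if one can find $\al$ and a cyclic $\chi\in L^2(\mu_A)$ so that $M_t+\al\langle\fdot,\chi\rangle\chi$ has spectral measure exactly $\mu_B$ with respect to $\chi$, then the intertwining unitary $U$ sends $\chi$ to $\ID$ and $UAU^{-1}-B=-\al\langle\fdot,\ID\rangle\ID$ is rank one. But everything after that is either unproved or false. The existence of the matching $\al$ --- which you yourself call ``the technical heart of the argument'' --- is precisely the content of Poltoratski's theorem, and you never produce it; the hypotheses are only gestured at (``prevents pathological boundary behaviour,'' ``places the support onto a level set''), never actually used.

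More seriously, the particular reduction you chose cannot work in general. By arranging $U\chi=\ID$ you force the rank-one perturbation in the $B$-picture to point in the direction of the fixed cyclic vector $\ID$ of $L^2(\mu_B)$, so your scheme requires $\mu_A$ to be mutually absolutely continuous with some member $\tau_\al$ of the \emph{one-parameter} Aronszajn--Donoghue family of $\mu_B$ generated by $\ID$. By Theorem \ref{t-AD}, each $\tau_\al$ is carried by the single level set $\{x:F_{\mu_B}(x+i0)=-1/\al\}$. A purely singular $\mu_A\perp\mu_B$ with $\supp\mu_A=X$ can easily split its mass between two different level sets, or sit on a set where the boundary values of $F_{\mu_B}$ fail to be real and constant; such a $\mu_A$ satisfies every hypothesis of the theorem yet is equivalent to no $\tau_\al$, so mutual singularity emphatically does not ``place the measure-theoretic support of $\mu_A$ onto an Aronszajn--Donoghue level set of $F_{\mu_B}$.'' The genuine proof in \cite{Polto2} must, and does, construct a \emph{new} Herglotz (equivalently, inner) function --- not $F_{\mu_B}$ itself --- whose associated family of singular Clark-type measures contains representatives of both $\mu_A$ and $\mu_B$; equivalently, the perturbation direction on the $B$-side must be allowed to be an arbitrary a.e.\ nonvanishing $\psi\in L^2(\mu_B)$, and the equal-support and gap-endpoint hypotheses are exactly what make that construction possible. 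Your argument omits this construction entirely, and the shortcut it substitutes is not valid.
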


\subsection{Anderson-type Hamiltonians}\label{ss-01}

Let $(\Omega, \mathcal{A}, \p)$ be a probability space, and consider the sequence of independent random complex variables $X_n(w)$, $w\in \Omega$. We assume that $\Omega=\prod_{n=0}^\infty \Omega_n$, where $\Omega_n$ are different
probability spaces, $w=(w_1,w_2,...)$, $w_n\in\Omega_n$ and the probability measure on
$\Omega$ is introduced as the product measure of the corresponding measures on $\Omega_n$.
Each of the independent random variables $X_n(w)$ depends only on the $n$-th coordinate, $w_n$.



Now, consider a self-adjoint operator $H$ on a separable Hilbert space $\cH$ and a sequence $\{\f_n\}\subset\cH$ of linearly independent unit vectors. Let $\omega=(\omega_1, \omega_2, \hdots)$ be a random variable corresponding to a probability measure $\p$ on $\R^\infty$. In particular, let the parameters $\omega_n$ be chosen i.i.d.~(independent, identically distributed) with respect to $\p$.
An \emph{Anderson-type Hamiltonian} \cite{JakLast2000} is an almost surely self-adjoint operator associated with the formal expression
\begin{equation}\label{Model}
H_\omega = H + V_\omega \qquad\text{on }\cH, \qquad V_\omega = \sum\limits_n \omega_n \langle\fdot, \f_n\rangle\f_n.
\end{equation}
As is customary, assume that the vectors $\f_n$ are orthogonal. However, many properties readily extend to the case of non-orthogonal $\f_n$ so long as \eqref{Model} almost surely defines a self-adjoint operator.

The archetype Anderson-type Hamiltonian is the discrete Schr\"odinger operator with random potential on $l^2(\Z^d)$, given by
\begin{equation}\label{e-dso}
Hf(x)=-\bigtriangleup f (x) = - \sum\limits_{|n|=1} (f(x+n)-f(x)), \quad \f_n(x)=\delta_n(x)=
\left\{\begin{array}{ll}1&x=n,\\ 0&\text{else.}\end{array}\right.
\end{equation}
The corresponding operator $H_\omega$ models quantum mechanical phenomena in a crystalline structure with random on-site potentials, and appears in many fields of mathematics. We note that Kolmogorov's 0-1 Law can be applied to Anderson-type Hamiltonians using the standard probabilistic set up described above, see \cite{AbaPolt, K} where the reader can find all the necessary definitions and basic properties. Many interesting properties that are studied (i.e.~cyclicity) are events, and the 0-1 Law hence states that the probability of such events are either 0 or 1.


\section{Motivating Example}\label{example}

We begin our endeavors by offering a simple example, which serves two purposes. First, it helps demonstrate the difficulties of computing the absolutely continuous part using a scalar spectral measure for even a simple rank--two perturbation, without using the construction introduced in Sections \ref{s-startiterate} and \ref{s-infinite}. Second, we will hook back into this example when we explain the choice of the direction vectors (later called $\f_k$) in the iteration process in Subsection \ref{ss-fk}.

We consider the following rank-two problem in the spectral representation with respect to one of the vectors and assume that the spectral measure of the unperturbed operator is
$d\mu(x):=\frac{1}{2}\chi\ci{[-1,1]}(x)dx$. Consider the normalized vectors 
$\f_1,\f_2\in L^2(\mu)$, where $\f_1$ is the constant function that is identically equal to $\ID$ with $\mu$ respect to almost everywhere.

We introduce the rank-two perturbation
$$H_{\alpha,\beta}=M_t+\alpha\langle\fdot ,\f_1\rangle_{L^2(\mu)}\f_1+\beta\langle\fdot ,\f_2\rangle_{L^2(\mu)}\f_2
\qquad
\text{on}
\qquad
L^2(\mu).$$
As an application of Aronszajn--Donoghue theory we can compute the absolutely continuous part of the rank-one perturbation $H_{\alpha,0}$ for $\beta = 0 $:
\begin{align}
\label{e-r1}
d[(\mu_{\alpha,0})_{ac}](x)= \frac{1}{2}\left\{1+\alpha^2+\alpha\ln\Big(\dfrac{x+1}{-x+1}\Big)+\left(\dfrac{\alpha}{4}\right)^2\Big[\ln\Big(\dfrac{x+1}{-x+1}\Big)\Big]^2\right\}^{-1}dx
\end{align}
for $x\in [-1,1]$, and $(\mu_{\alpha,0})_{ac}\equiv 0$ outside $[-1,1]$.
In general, the introduction of a second rank-one perturbation causes the problem to be too expensive to merit computation. Indeed, Aronszajn--Donoghue theory will require integration with respect to $\mu_{\alpha, 0}$. As a consequence, computing the spectral measure under such an iterative rank-two perturbation, or even just its eigenvalues, seems practically unfeasible.

\section{Overview of Iterative Process}\label{ss-diagram}

The general construction described in Sections \ref{s-firstpert} through \ref{s-infinite} is somewhat complicated, so it may be beneficial to the reader to have an overview of the string of processes before diving in. 

We focus on the three main operations and specifically how they change the space we are analyzing. The process begins with a perturbed operator that acts on a space $L^2(\widetilde{\mu}_{\al_{k-1}})$, where $d\widetilde{\mu}_{\al_{k-1}}=\tau_{k-1}\chi\ci{[-1,1]}(x)dx$ for some constant $0<\tau_{k-1}\leq 1$. These $L^2$ spaces, given by a measure with a tilde, are starting points because the total \emph{strength} of the absolutely continuous spectrum (within $[-1,1]$) is easily calculated. They are referred to as auxiliary spaces throughout the manuscript. A broad description of each operation follows, see the referenced sections for more on each step.

\begin{enumerate}
    \item The spectral theorem is used on the operator that acts on $L^2(\widetilde{\mu}_{\al_{k-1}})$ to yield the operator that is multiplication by the independent variable $M_t$ on a new space denoted $L^2(\mu_{\al_{k}})$. The explicit transform is given by the spectral representation $V_{\al_k}$ (the unitary operator realizing the spectral theorem) and described in Subsection \ref{ss-5.1}.
    \item The new operator is perturbed by a parameter $\al_{k+1}$ and specific vector $\f_{k+1}$ given in Corollary \ref{c-morevectors}, that depends on the previous perturbation. Vector $\f_{k+1}$ is orthogonal to previous perturbation vectors, allowing us to focus on just the absolutely continuous part of the measure $L^2[(\mu_{\al_k})\ti{ac}]$. For convenience, we denote this operation as ``$+\f_{k+1}$'' over a squiggly arrow in the diagram below.
    \item Finally, a unitary transform $U_k$ is applied to the operator which translates it to the space $L^2(\widetilde{\mu}_{\al_{k}})$, see Corollary \ref{c-morevectors}. This space is comparable to $L^2(\widetilde{\mu}_{\al_{k-1}})$, and our operations may be repeated again.
\end{enumerate}

The following schematic details how the $k+1$ and $k+2$ perturbations are performed by identifying the spaces of interest.

\begin{center}
\begin{tikzcd}[row sep=huge, column sep=large]
 \dashrightarrow L^2(\widetilde{\mu}_{\al_{k-1}}) \arrow[r,"V_{\al_{k}}"]
    & L^2(\mu_{\al_{k}}) \arrow[r, rightsquigarrow,"+\f_{k+1}"]
        \arrow[d, phantom, ""{coordinate, name=Z}]
      & L^2[(\mu_{\al_k})\ti{ac}] \arrow[dll,
                 "U_k",
rounded corners,
to path={ -- ([xshift=2ex]\tikztostart.east)
|- (Z) [near end]\tikztonodes
-| ([xshift=-2ex]\tikztotarget.west) -- (\tikztotarget)}] \\
         L^2(\widetilde{\mu}_{\al_k}) \arrow[r,"V_{\al_{k+1}}"]
    & L^2(\mu_{\al_{k+1}}) \arrow[r, rightsquigarrow,"+\f_{k+2}"]
& L^2[(\mu_{\al_{k+1}})\ti{ac}] \dashrightarrow \end{tikzcd}
\end{center}

Please see the beginning of Section \ref{s-infinite} for a more detailed procedure.

\section{A First Perturbation}\label{s-firstpert}

We begin with constructing a rank-one perturbation in the spectral representation. Namely, consider the spectral measure 
\begin{align}\label{e-firstmeasure}
d\widetilde{\mu}_0(x):=\dfrac{1}{2}~\chi\ci{[-1,1]}(x)dx
\end{align}
and let vector $\widetilde\f_1$ be the constant function of $L^2(\widetilde\mu_0)$ that is identically equal to 1. Note that $\widetilde\f_1$ has unit norm.
Now, consider the family of (bounded) rank-one perturbations:
\begin{align}\label{e-firstpert}
\widetilde{H}\ci{\alpha_1}=M_t+\alpha_1\langle\fdot ,\widetilde{\f}_1\rangle_{L^2(\widetilde{\mu}_0)}\widetilde{\f}_1
\qquad
\text{on }
L^2(\widetilde{\mu}_0)\text{, where }
\alpha_1\in\R.
\end{align}

The total mass of a measure $\eta$,
\begin{align*}
\|{\eta}\|:=\int_{\RR}d{\eta}(t),
\end{align*}
will play a key role in comparing the remaining mass of the absolutely continuous parts of the spectral measures we produce within the iteration process.

By construction we have $\|\widetilde{\mu}_0\|=1$. The properties of the perturbed operator are captured by Aronszajn--Donoghue theory (Theorem \ref{t-AD}). Applied to the current situation, this result yields the following observation, which we will utilize at each step of our construction.

\begin{lem}\label{l-compute} 
Let $\widetilde{\mu}_0$ and $\widetilde{H}_{\al_1}$ respectively be given by \eqref{e-firstmeasure} and \eqref{e-firstpert}. Let $\mu\ci{\alpha_1}$ be the spectral measure of $\widetilde{H}\ci{\alpha_1}$. If $\alpha_1 \neq 0$, then

\begin{enumerate}
\item the perturbed operator $\widetilde{H}\ci{\alpha_1}$ has exactly one eigenvalue, $x\ci{\alpha_1}$, and
\begin{align*}
x\ci{\alpha_1}:=\dfrac{-1-e^{2/\al_1}}{1-e^{2/\al_1}}\,\in \R\backslash [-1,1].
\end{align*}
\item The created eigenvalue has weight/mass:
\begin{align*}
\mu\ci{\alpha_1}\{x\ci{\alpha_1}\}=\dfrac{4 e^{2/\al_1}}{\al_1^2(e^{2/\al_1}-1)^2}\,.
\end{align*}

\end{enumerate}
\end{lem}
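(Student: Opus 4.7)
The plan is to apply the Aronszajn--Donoghue theorem (Theorem \ref{t-AD}) directly, which amounts to computing the Borel transform $F$ and its derivative $G$ of the explicit measure $\widetilde{\mu}_0$, then solving the equation $F(x+i0) = -1/\alpha_1$.

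First I would compute
\[
F(z) = \int_{-1}^{1}\frac{1/2\,d\lambda}{\lambda - z} = \tfrac{1}{2}\ln\frac{1-z}{-1-z}, \qquad z\in\C\setminus[-1,1],
\]
with the principal branch of the logarithm. For $x\in(-1,1)$ the boundary value satisfies $\IM F(x+i0) = \pi w(x) = \pi/2 \neq 0$, so $(-1,1) \subset L$ and no point of $(-1,1)$ can lie in $P_{\alpha_1}$. Thus any eigenvalue of $\widetilde{H}_{\alpha_1}$ must lie in $\R\setminus[-1,1]$, where $F$ is real-valued.

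Next I would establish uniqueness by monotonicity. Since $F'(x) = G(x) > 0$ on each component of $\R\setminus[-1,1]$, $F$ is strictly increasing on $(1,\infty)$ from $-\infty$ to $0$, and on $(-\infty,-1)$ from $0$ to $+\infty$. For any $\alpha_1 \neq 0$, the equation $F(x) = -1/\alpha_1$ therefore has exactly one real solution (in $(1,\infty)$ when $\alpha_1>0$ and in $(-\infty,-1)$ when $\alpha_1<0$). Setting
\[
\tfrac{1}{2}\ln\frac{1-x}{-1-x} = -\frac{1}{\alpha_1}
\]
and solving algebraically (exponentiating and clearing) yields
\[
x_{\alpha_1} = \frac{1+e^{-2/\alpha_1}}{1-e^{-2/\alpha_1}} = \frac{-1-e^{2/\alpha_1}}{1-e^{2/\alpha_1}},
\]
which is item (1). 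Since $|x_{\alpha_1}|>1$, the auxiliary transform $G(x_{\alpha_1})$ is finite, so $x_{\alpha_1} \in P_{\alpha_1}$ rather than $S_{\alpha_1}$, confirming that it is indeed an eigenvalue.

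For item (2), I would compute
\[
G(x) = \tfrac{1}{2}\int_{-1}^{1}\frac{d\lambda}{(\lambda-x)^2} = \frac{1}{x^2-1}
\]
for $x\in\R\setminus[-1,1]$, and then apply the mass formula $\mu_{\alpha_1}\{x_{\alpha_1}\} = 1/[\alpha_1^2 G(x_{\alpha_1})] = (x_{\alpha_1}^2-1)/\alpha_1^2$ from Theorem \ref{t-AD}(2). Writing $y = e^{2/\alpha_1}$, a short simplification of $x_{\alpha_1}^2-1 = [(y+1)^2 - (y-1)^2]/(y-1)^2 = 4y/(y-1)^2$ produces the stated value $4e^{2/\alpha_1}/[\alpha_1^2(e^{2/\alpha_1}-1)^2]$.

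There is no real obstacle; the only point demanding care is tracking the branch of the logarithm and the sign conventions so that the closed-form $x_{\alpha_1}$ lies in the correct component of $\R\setminus[-1,1]$ for each sign of $\alpha_1$. Once $F$ and $G$ are in hand, everything reduces to the one-parameter equation $F(x) = -1/\alpha_1$ and a direct application of Theorem \ref{t-AD}.
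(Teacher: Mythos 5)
Your proposal is correct and follows essentially the same route as the paper: apply Aronszajn--Donoghue, compute $F(z)=\tfrac12\ln\frac{1-z}{-1-z}$ and $G(x)=1/(x^2-1)$ explicitly, solve $F(x)=-1/\alpha_1$ for the unique eigenvalue, and read off its mass from $1/[\alpha_1^2 G(x_{\alpha_1})]$. The only difference is cosmetic: you make the uniqueness explicit via the monotonicity of $F$ on each component of $\R\setminus[-1,1]$, whereas the paper simply solves the logarithmic equation and notes which side of $[-1,1]$ the root lands on according to the sign of $\alpha_1$.
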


\begin{proof}
\indent $(1)$ On $[-1,1]$, the imaginary part of $F(z)=\int\ci{\mathbb{R}}\frac{d\widetilde{\mu}_0(\lambda)}{\lambda -z}$ is strictly positive. So, $
\widetilde{H}\ci{\alpha_1}$ does not have any eigenvalues on $[-1,1]$ for all $\alpha_1$.

Further, the assumptions of the lemma imply
\begin{align*}
G(x)=\int_{-1}^{1}\dfrac{d\widetilde{\mu}_0(\lambda)}{(\lambda -x)^2}<\infty
\qquad
\text{for all }x\in\R\backslash [-1,1].
\end{align*}
By Theorem \ref{t-AD},  
$\widetilde{H}\ci{\alpha_1}$ has an eigenvalue at such $x$ if and only if
$-\dfrac{1}{\alpha_1}=\text{F}(x+i0)$.
Hence, eigenvalues occur at $x\in\R\backslash [-1,1]$ that satisfy
\begin{align*}
-\dfrac{1}{\alpha_1}=\int\ci{\mathbb{R}}\dfrac{d\widetilde{\mu}_0(\lambda)}{\lambda - x}=\dfrac{1}{2}\int_{-1}^{1}\dfrac{d\lambda}{\lambda - x}=\dfrac{1}{2}\text{ln}\left(\dfrac{1-x}{-1-x}\right).
\end{align*}
The solution to the previous equation for $x$ depends on $\al_1$ and will be denoted as
\begin{align*}
x_{\alpha_1}:=\dfrac{-1-e^{2/\al_1}}{1-e^{2/\alpha_1}}\,.
\end{align*}
In particular, $x_{\alpha_1}<-1$ for $\alpha_1<0$, while $x_{\alpha_1}>1$ for $\alpha_1>0$.

$(2)$ By Theorem \ref{t-AD}, the mass of the created eigenvalue is
$
\mu\ci{\alpha_1}\{x\ci{\alpha_1}\}=\dfrac{1}{\al_1^2 G(x\ci{\alpha_1})},
$
where
\begin{align*}
G(x\ci{\alpha_1})=\dfrac{1}{2}\int_{-1}^{1}\dfrac{d\la}{(\la -x\ci{\alpha_1})^2}=-\dfrac{1}{2}\left[\dfrac{1}{1-x\ci{\alpha_1}}+\dfrac{1}{1+x\ci{\alpha_1}}\right].
\end{align*}
Inserting the value of $x\ci{\alpha_1}$ calculated in part $(1)$ yields $1/G(x\ci{\alpha_1})=\dfrac{4 e^{2/\al_1}}{(e^{2/\al_1}-1)^2}$. The second statement of the lemma thus follows.
\end{proof}

\section{Iterated Perturbations}\label{s-startiterate}

This section explains the heart of our construction. After fixing $\al_1$, we set the stage for the successive perturbations by describing how the operator $H\ci{\alpha_1}$ is perturbed. In particular, the specific choice of the direction of the second perturbation, $\f_2$, will allow us to calculate the total mass of the remaining absolutely continuous part of the spectrum. The difficulties encountered in the example in Section \ref{example} are bypassed by applying a unitary transformation which exploits the choice of $\f_2$. After the transformation, computations from Aronszajn--Donoghue theory again resemble those of Lemma \ref{l-compute}.

\subsection{Unitary equivalence and the remaining absolutely continuous spectrum}\label{ss-5.1} Recall the rank-one perturbation setup discussed in Section \ref{s-firstpert}, namely,
\begin{align*}
\widetilde{H}\ci{\alpha_1}=M_t+\alpha_1\langle\fdot ,\widetilde{\f}_1\rangle_{L^2(\widetilde{\mu}_0)}\widetilde{\f}_1
\,\,\,\text{on}\,\,\,
L^2(\widetilde{\mu}_0)
\,\,\,\text{where}\,\,\,
d\widetilde{\mu}_0(x):=\dfrac{1}{2}~\chi\ci{[-1,1]}(x)dx
\,\,\,\text{and}\,\,\,
\widetilde\f_1\equiv {\bf 1}.
\end{align*}

Fix the realization of $\al_1$ in accordance with the probability measure $\p$. Then, Aronszajn--Donoghue theory (Theorem \ref{t-AD}) provides us with information about the spectral measure, $\mu\ci{\alpha_1}$, of the perturbed operator $\widetilde{H}\ci{\alpha_1}$ from the previous section. Furthermore, we know that the support of the absolutely continuous part of the measure is still equal to $[-1,1]$ due to the Kato--Rosenblum theorem, Theorem \ref{t-KR}. The operator $\widetilde{H}\ci{\alpha_1}$ is represented in the space $L^2(\mu\ci{\alpha_1})$ as multiplication by the independent variable due to the spectral theorem. 

By a slight abuse of notation, for future iterations we will still write $M_t$ for this operator to avoid an infinite sequence of independent variables. In particular, we have the unitary equivalence between operators
\begin{align}\label{e-SpecRep}
\left(\widetilde{H}\ci{\alpha_1}\text{ on }L^2(\widetilde{\mu}_0)\right)
\,\,\,\,\sim\,\,\,\,
\left(M_t\text{ on }L^2({\mu}\ci{\alpha_1})\right).
\end{align}
Let $V\ci{\alpha_1}: L^2(\widetilde{\mu}_0)\to L^2({\mu}\ci{\alpha_1})$ denote the corresponding intertwining unitary operator such that
$$
V\ci{\alpha_1} \widetilde{H}\ci{\alpha_1}= M_t V\ci{\alpha_1}
\qquad\text{and}\qquad
V\ci{\alpha_1}\widetilde\f_1 \equiv \text{constant}.
$$

In order to find the value of this constant, we notice that the operator $V\ci{\al_1}$ is given explicitly in the Representation Theorem \ref{t-repthm} \cite{LiawTreil1}. Hence, by construction we have that $V\ci{\al_1}\ID=\ID$, where the $\ID$ vectors are understood to be in the appropriate $L^2$ spaces, $L^2(\widetilde{\mu}_0)$ and $L^2(\mu\ci{\al_1})$ respectively.

Together with the unitary property of $V_{\al_1}$ we see that
\begin{align}\label{calc}
\|\widetilde{\mu}_0\|=\int_{\RR}d{\widetilde{\mu}_0}(t)
=\|\ID\|\ci{L^2(\widetilde{\mu}_0)}=\|V\ci{\al_1}\ID\|\ci{L^2(\mu_{\al_1})} 
&=\|\ID\|\ci{L^2(\mu_{\al_1})}
=\int_{\RR}d{\mu_{\al_1}}(t)=\|\mu_{\al_1}\|.
\end{align}

In particular, we have
\[
\|(\mu_{\al_1})\ti{ac}\|
=
\|\widetilde{\mu}_0\| - \mu\ci{\alpha_1}\{x\ci{\alpha_1}\}
=\|(\widetilde{\mu}_0)\ti{ac}\| - \mu\ci{\alpha_1}\{x\ci{\alpha_1}\}
=1-\mu\ci{\alpha_1}\{x\ci{\alpha_1}\}<1.
\]

\begin{rem*}
Researchers experienced in the field may feel \eqref{calc} to be contradictory to results in Clark theory, the basis of unitary rank-one perturbation theory. However, a central theme discovered while producing these results is that in these aspects, self-adjoint theory and unitary theory are quite different. For instance, attempting to move this calculation into the unitary perturbation case with the Cayley transform involves an adjustment for the perturbation vector which causes some cancellations, see \cite[Lemma 5.1]{LiawTreil2}. Furthermore, the condition in the representation theorem that requires $V\ci{\al_1}\ID=\ID$ is believed to be equivalent to the statement that $\theta(0)=0$, where $\theta$ is the generating characteristic function for the Clark measures. Hence, a contradiction with a result similar to \cite[Prop. 9.1.8]{CimaMathesonRoss} is not created.
\end{rem*}

\subsection{The direction of the second perturbation vector} The second rank-one perturbation is now added in the direction of some particular function $\f_2\in\cH$, to be chosen in accordance with Proposition \ref{p-f2}. The task will be to observe properties of
\begin{equation}\label{e-example}
H\ci{\alpha_2}=M_t+\al_2\langle\fdot,\f_2\rangle\ci{L^2(\mu\ci{\alpha_1})}\f_2\quad\text{on }L^2(\mu\ci{\alpha_1}).
\end{equation}
To see that $H\ci{\alpha_2}$ is a rank-two perturbation of the original operator, recall that by \eqref{e-SpecRep} the operator $M_t$ on $L^2(\mu\ci{\alpha_1})$ is the spectral presentation of the original perturbed operator $H_{\alpha_1}$ on $L^2(\widetilde{\mu}_0)$.

As announced, we study $H\ci{\alpha_2}$ by moving to an auxiliary space. The following proposition encapsulates the main idea of this work: the choice of $\f_2$ and of the unitary operator $U_1$ which passes the spectral calculations from $L^2(\mu_{\al_1})$ to a particular \emph{auxiliary space} denoted by $L^2(\widetilde{\mu}_{\al_1})$.

\begin{prop}\label{p-f2}
By a choice of a unitary multiplication operator $U_1$ and a unit vector $\f_2\in L^2({\mu}\ci{\alpha_1})$, we can arrange for
\begin{align}\label{e-perp}
\f_2\perp L^2[(\mu\ci{\alpha_1})\ti{pp}],\qquad \f_2\perp \ID
\end{align}
(recall that $V\ci{\alpha_1}\widetilde\f_1\equiv\,$constant, so that $\f_2\perp V\ci{\alpha_1}\widetilde\f_1$),
and for the following unitary mapping of spaces
\begin{align}\label{e-U1}
U_1: L^2[(\mu\ci{\alpha_1})\ti{ac}]\to L^2(\widetilde{\mu}\ci{\alpha_1}),
\end{align}
as well as for
\begin{align}\label{e-tau1}
d(\widetilde{\mu}_{\al_1})\ci{ac} (x)= \tau_1\chi\ci{[-1,1]}(x)dx
\qquad \text{for some constant }\tau_1.
\end{align}
\end{prop}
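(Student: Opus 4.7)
The plan is to construct $\widetilde{\mu}_{\al_1}$, $U_1$, and $\f_2$ explicitly from the Aronszajn--Donoghue decomposition of $\mu_{\al_1}$ already set up in Sections \ref{s-firstpert} and \ref{ss-5.1}. First, I would invoke Theorem \ref{t-AD} together with Lemma \ref{l-compute} to write $\mu_{\al_1} = (\mu_{\al_1})\ti{ac} + (\mu_{\al_1})\ti{pp}$, where the a.c.\ part has Lebesgue density $w_{\al_1}$ supported on $[-1,1]$ and the pp part is the single atom at $x_{\al_1} \notin [-1,1]$ of mass $m_{\al_1} := \mu_{\al_1}\{x_{\al_1}\}$. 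By Lemma \ref{l-SIM},
$$
w_{\al_1}(x) = \pi^{-1}\,\frac{\IM F(x+i0)}{|1+\al_1 F(x+i0)|^{2}},
$$
which is strictly positive a.e.\ on $[-1,1]$ because $\IM F(x+i0) > 0$ on $(-1,1)$ for the starting measure \eqref{e-firstmeasure}. The mass identity \eqref{calc} then gives $\|(\mu_{\al_1})\ti{ac}\| = 1 - m_{\al_1}$.

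Next, I would declare the auxiliary measure to be purely absolutely continuous by setting $\tau_1 := (1-m_{\al_1})/2$ and $d\widetilde{\mu}_{\al_1}(x) := \tau_1\,\chi\ci{[-1,1]}(x)\,dx$, so that $\|\widetilde{\mu}_{\al_1}\| = 2\tau_1 = \|(\mu_{\al_1})\ti{ac}\|$. Define the multiplication operator
$$
(U_1 f)(x) := \sqrt{w_{\al_1}(x)/\tau_1}\, f(x), \qquad x\in[-1,1],
$$
from $L^2[(\mu_{\al_1})\ti{ac}]$ into $L^2(\widetilde{\mu}_{\al_1})$. A one-line change-of-measure check yields $\|U_1 f\|^{2}_{L^2(\widetilde{\mu}_{\al_1})} = \int_{-1}^{1} |f|^2 w_{\al_1}\, dx = \|f\|^{2}_{L^2[(\mu_{\al_1})\ti{ac}]}$, and a.e.\ positivity of $w_{\al_1}$ on $[-1,1]$ makes $U_1$ bijective with inverse $U_1^{-1} g = \sqrt{\tau_1/w_{\al_1}}\, g$.

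For the perturbation direction, since $L^2(\widetilde{\mu}_{\al_1})$ is infinite-dimensional, the orthogonal complement of $\spa\{U_1 \ID\}$ contains a unit vector $\widetilde{\f}_2$; set $\f_2 := U_1^{-1} \widetilde{\f}_2$. Since $\f_2$ is supported on the absolutely continuous part of $\mu_{\al_1}$, the first condition in \eqref{e-perp} is immediate. For the second condition, unitarity of $U_1$ yields
$$
\langle \f_2, \ID \rangle\ci{L^2[(\mu_{\al_1})\ti{ac}]} = \langle \widetilde{\f}_2, U_1 \ID \rangle\ci{L^2(\widetilde{\mu}_{\al_1})} = 0,
$$
and combining with $\f_2 \perp L^2[(\mu_{\al_1})\ti{pp}]$ extends this to $\langle \f_2, \ID \rangle\ci{L^2(\mu_{\al_1})} = 0$, as desired.

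The principal technical point is verifying $w_{\al_1}>0$ a.e.\ on $[-1,1]$ (so that $U_1^{-1}$ is well defined) and that $\tau_1$ is pinned down by the mass balance $2\tau_1 = 1 - m_{\al_1}$ in \eqref{calc}; both of these are consequences of already-stated results. Everything else is bookkeeping: the orthogonality conditions drop out of the unitarity of $U_1$, and the shape of $(\widetilde{\mu}_{\al_1})\ti{ac}$ is arranged by the very definition of $\tau_1$.
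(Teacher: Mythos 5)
Your construction of $\tau_1$ and of the multiplication operator $U_1=M_{\sqrt{w_{\al_1}/\tau_1}}$ coincides exactly with the paper's, and your verification of unitarity, of the mass balance $2\tau_1=\|(\mu_{\al_1})\ti{ac}\|=1-m_{\al_1}$, and of the a.e.\ positivity of $w_{\al_1}$ via Lemma \ref{l-SIM} is fine (indeed slightly more explicit than the paper, which omits the density computation). The orthogonality bookkeeping for \eqref{e-perp} is also correct as far as it goes.

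The gap is in the choice of $\f_2$. You take $\widetilde{\f}_2$ to be \emph{any} unit vector in $L^2(\widetilde{\mu}_{\al_1})$ orthogonal to $U_1\ID$ and set $\f_2=U_1^{-1}\widetilde{\f}_2$. This satisfies the three displayed conditions verbatim, but it does not deliver what the proposition is actually required to produce for the iteration: the paper chooses $\f_2(t)=h_2(t)/\sqrt{2w_1(t)}$ with $|h_2|\equiv 1$ on $[-1,1]$, precisely so that $\widetilde{\f}_2=U_1\f_2=h_2/\sqrt{2\tau_1}$ has \emph{constant modulus}. That constancy is what makes the recast problem \eqref{e-ranktwo} on $L^2(\widetilde{\mu}_{\al_1})$ literally a copy of the setting of Lemma \ref{l-compute} (constant weight, constant-modulus direction), so that the Borel transform computations $F$ and $G$ reduce to integrals of $\tau_1\,dt$ over $[-1,1]$ and the eigenvalue location and mass can be computed in closed form at the next step; see the remark following the proposition, step (5) of Section \ref{s-infinite}, and Corollary \ref{c-morevectors}, all of which assert that $\widetilde{\f}_k$ is a constant unit vector. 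With an arbitrary $\widetilde{\f}_2$ the relevant spectral data involve $|\widetilde{\f}_2|^2\,d\widetilde{\mu}_{\al_1}$, which is no longer a constant multiple of Lebesgue measure on $[-1,1]$, and the entire recursion collapses. The existence of a unimodular $h_2$ orthogonal to the prescribed finite family (and vanishing at $x_{\al_1}$) is not a soft infinite-dimensionality fact; it is exactly the content of Lemma \ref{l-APP} in the Appendix, which your argument bypasses and therefore cannot replace.
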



\begin{rem*} Before we prove this proposition, we explain what the results mean for the successive  construction.
\begin{itemize}
\item[(a)] Equation \eqref{e-perp} ensures that the previously introduced eigenvalues remain unchanged and that the sequence of direction vectors will be orthogonal.
\item[(b)] Equations \eqref{e-U1} and \eqref{e-tau1} mean that the problem is once again simplified to one that resembles the setup in Section \ref{s-firstpert}. So, by the spectral theorem, the rank-two perturbation $\widetilde{H}\ci{\alpha_2}$ can be recast as
\begin{equation}\label{e-ranktwo}
\widetilde{H}\ci{\alpha_2}=M_t+\al_2\langle\fdot,\widetilde{\f}_2\rangle\ci{L^2(\widetilde{\mu}_{\al_1})}\widetilde{\f}_2\quad\text{on }L^2(\widetilde{\mu}_{\al_1}).
\end{equation}

 Indeed, observe how the mass of the absolutely continuous spectrum decreases. For instance, let $(\mu\ci{\alpha_2})\ci{ac}$ denote the absolutely continuous part of the spectral measure corresponding to the rank-two perturbed operator $\widetilde{H}\ci{\alpha_2}$. In light of the above discussion on the properties of $V_{\al_1}$, the mass of this measure can be calculated as
\begin{align*}
\|(\mu\ci{\alpha_2})\ci{ac}\|=\|(\mu\ci{\alpha_1})\ci{ac}\|-\mu\ci{\alpha_2}\{x\ci{\alpha_2}\} =1 - \mu\ci{\alpha_1}\{x\ci{\alpha_1}\}-\mu\ci{\alpha_2}\{x\ci{\alpha_2}\}.
\end{align*}
This is the main reason why we make the particular choices for the direction vectors.
Thus, the proposition claims that our task becomes much simpler when we pass to an auxiliary measure $\widetilde{\mu}_{\al_1}$. Indeed, the numerical constant $\tau_1$ is related to those in Lemma \ref{l-compute} via $\tau_1  = \|(\widetilde{\mu}_{\al_1})\ci{ac}\|/2$.
\end{itemize}
\end{rem*}

\begin{proof}[Proof of Proposition \ref{p-f2}]
Recall that the first perturbation only had the effect of creating an eigenvalue, so we may assume that
$$
(d\mu_{\al_1})\ti{ac}(t)=w_1(t)dt,
$$
where $w_1(t)$ is some weight function. As was done in the example in Section \ref{example}, this weight function can be exactly determined by using Theorem \ref{t-AD} and Lemma \ref{l-SIM}. We omit this tedious calculation for brevity. Also, $x\ci{\alpha_1}$ was an eigenvalue outside $[-1,1]$ created by $\f_1$, analyzed in Lemma \ref{l-compute}. The corresponding eigenvector is supported at the single point $\left\{x\ci{\alpha_1}\right\}$.

Define the function $h_2\in L^2[(\mu_{\al_1})\ti{ac}]$ in accordance with Lemma \ref{l-APP} such that 
\begin{align*}
    h_2(x_{\al_1})=0, \text{ } |h_2(x)|=1 \text{ for } x\in[-1,1], \text{ and } h_2\perp V_{\al_1}\widetilde{\f}_1.
\end{align*}
Then define the perturbation vector 
\begin{align*}
\f_2(t)=\left\{
\begin{array}{ll}\dfrac{h_2(t)}{\sqrt{2w_1(t)}}\quad& \text{for }t\in (-1,1),\\0&\text{else.}\end{array}\right.
\end{align*}
This definition ensures that the conditions of equation \eqref{e-perp} are satisfied, thanks to the function $h_2$. In particular, we have the orthogonal decomposition
\begin{equation}\label{e-pertbreakdown}
H\ci{\alpha_2}=M_t\oplus[M_t+\al_2\langle\fdot,\f_2\rangle_{L^2(\mu\ci{\alpha_1})}\f_2]\quad\text{on }L^2[(\mu\ci{\alpha_1})\ti{pp}]\oplus L^2[(\mu\ci{\alpha_1})\ti{ac}],
\end{equation}
so that the eigenvalue $x\ci{\alpha_1}$ will remain unchanged by the second perturbation.
Further examinations can thus be reduced to $M_t+\al_2\langle\fdot,\f_2\rangle\f_2$ on $L^2[(\mu\ci{\alpha_1})\ti{ac}]$. 

The choice of $\f_2$ has several favorable consequences. The spectral representation of $H\ci{\alpha_2}$ with respect to the vector $\f_2$ will be used to transform to the appropriate auxiliary space $L^2(\widetilde{\mu}_{\al_1})$ that will guarantee \eqref{e-U1}. Let us carry this out.

The unitary operator that realizes this spectral representation is the multiplication operator $U_1: L^2[(\mu\ci{\alpha_1})\ti{ac}]\to L^2(\widetilde{\mu}\ci{\alpha_1})$ is given by
\begin{align}
\label{d-U}
U_1:=M_{\sqrt{w_1(t)/\tau_1}}.
\end{align}
Operator $U_1$ is unitary because if $f\in L^2(\mu\ci{\alpha_1})\ti{ac}$, then
\begin{align*}
\|f\|_{L^2[(\mu\ci{\alpha_1})\ti{ac}]}^2&=\int_{-1}^{1}|f(t)|^2(d\mu\ci{\alpha_1})\ti{ac}(t)=\int_{-1}^{1}|f(t)|^2w_1(t)dt \\
&=\int_{-1}^{1}\left|f(t)\sqrt{w_1(t)}\right|^2dt=\bigintss_{-1}^{1}\left|f(t)\sqrt{\dfrac{w_1(t)}{\tau_1}}\right|^2\tau_1 dt =\|U_1 f\|_{L^2(\widetilde{\mu}\ci{\alpha_1})}^2.
\end{align*} 

The mass of $x\ci{\alpha_1}$ was explicitly calculated in Section \ref{s-firstpert}. We define
\begin{align*}
\tau_1:=||(\mu_{\al_1})\ti{ac}||/2=\frac{1}{2}\int_{-1}^{1}(d\mu_{\al_1})\ti{ac}(t)=\frac{1}{2}\left(1-\mu_{\al_1}\{x\ci{\alpha_1}\}\right) 
=\frac{1}{2}\left(1-\dfrac{4 e^{2/\al_1}}{\al_1^2(e^{2/\al_1}-1)^2}\right).
\end{align*}
And the explicit verification of property $(3)$ then follows:
\begin{align*}
    ||(\widetilde{\mu}_{\al_1})\ti{ac}||=||1||_{L^2[(\widetilde{\mu}_{\al_1})\ti{ac}]}&=||U_1^{-1}(1)||_{L^2[(\mu_{\al_1})\ti{ac}]}\\
    &=\int_{-1}^1\left|\sqrt{\dfrac{\tau_1}{w_1(t)}}\right|^2w_1(t)dt=\int_{-1}^1\tau_1dt=2\tau_1,
\end{align*}
where we used the fact that $U_1$, and hence $U_1^{-1}$, is unitary for the second equality.
Finally, we note that $\f_2$ is a unit vector in $L^2[(\mu_{\al_1})\ti{ac}]$:
\begin{align*}
    ||\f_2||_{L^2[(\mu_{\al_1})\ti{ac}]}=\bigintsss_{-1}^1\left|\dfrac{h_2(t)}{\sqrt{2w_1(t)}}\right|^2w_1(t)dt=\int_{-1}^1\dfrac{1}{2}dt=1.
\end{align*}
The unitary operator $U_1$ then establishes $\widetilde{\f}_2$ as a unit vector in $L^2(\widetilde{\mu}_{\al_1})$, in accordance with equation \eqref{e-ranktwo}. It can be seen that with these choices of $U_1$ and $\f_2$, the result follows. 
\end{proof}

\section{Absolutely Continuous Spectrum under Infinite Iterations}\label{s-infinite}
The iteration strategy is now essentially clear:
\begin{enumerate}

\item Begin with the setting as in Section \ref{s-firstpert}, that is, consider the multiplication by the independent variable on  $L^2(\widetilde\mu_0)$ with $d\widetilde{\mu}_0(x):=\dfrac{1}{2}~\chi\ci{[-1,1]}(x)dx$ and the perturbation direction $\widetilde\f_1\equiv {\bf 1}$ in $L^2(\widetilde\mu_0)$.

\item Take a probability distribution $\Omega$, and use it to fix a realization $(\alpha_1, \alpha_2, \hdots)$ of the random variables.

\item Carry out the first perturbation as in Subection \ref{ss-5.1}. Let $k=2$.

\item Take the unit vector in direction $\f_k\in L^2(\mu_{\al_{k-1}})$ in accordance with Proposition \ref{p-f2}.

\item By Proposition \ref{p-f2} and \eqref{e-ranktwo} the new perturbation problem $\widetilde H\ci{\alpha_k}$ on the auxiliary space $L^2(\widetilde{\mu}_{k-1})$ is as follows: the unperturbed perturbed operator equals multiplication by the independent variable on the $L^2 $ space with measure $d\widetilde{\mu}_{k-1}(x)= \tau_{k-1} \chi\ci{[-1,1]}(x)dx$ and the perturbation direction of the constant unit vector $\widetilde\f_{k}\in L^2(\widetilde{\mu}_{k-1})$. 

\item Apply the spectral theorem to yield the operator $M_t$ on the space $L^2(\mu_{\al_k})$. This drops the ``\,$\widetilde{\,\,\,}\,$" in the notation and replaces $k$ by $k+1$.

\item Repeat steps (4) through (7) with Proposition \ref{e-ranktwo} replaced by Corollary \ref{c-morevectors} below.
\end{enumerate}

Note that $\tau_{k-1}$ equals the remaining total mass after the $k$-th iteration, see Subsection \ref{ss-tau} below. Keeping track of the sequence $\{\tau_k\}$ is the ultimate goal of our endeavors. Also see Section \ref{ss-diagram} for an overview of the iterated process.

\subsection{The $k$-th Perturbation Vector}\label{ss-fk}

After step (1) the problem is to consider the $k$-th rank-one perturbation. In analogy to equation \eqref{e-ranktwo} we now consider
\[
H\ci{\alpha_k}=M_t+\al_k\langle\fdot,\f_k\rangle\ci{L^2(\mu\ci{\alpha_{k-1}})}\f_k\quad\text{on }L^2(\mu\ci{\alpha_{k-1}}).
\]

Let $\{f_1,\hdots,f_{k-1}\}$ denote the vectors in $L^2(\mu\ci{\alpha_{k-1}})$ that correspond to the directions of previous perturbations, which were chosen after the previous $k-1$ steps. In other words, we let 
\[
\left(f_n\in L^2(\mu\ci{\alpha_{n-1}})\right)
\quad
\sim\quad
\left(\f_n\in L^2(\mu\ci{\alpha_{n}})\right)
\qquad\text{for}\qquad
n=1,\hdots,k-1,
\]
where $\sim$ refers to the unitary equivalence in accordance with appropriate composition (different for each $n$) of unitary transformations.
Recall that $M_t$ in $L^2(\widetilde{\mu}_{\al_{k-1}})$ corresponds to the previous rank-$(k-1)$ perturbation in its spectral representation. The following corollary to the proof of Proposition \ref{p-f2} shows that the direction of the $k$-th perturbation vector $\f_k$ can be chosen analogously. 

\begin{cor}\label{c-morevectors}
We can choose $\f_k\in L^2({\mu}\ci{\alpha_{k-1}})$ so that
\begin{align}\label{e-perp2}
\f_k\perp L^2[(\mu\ci{\alpha_n})\ti{pp}]
\text{ and }\f_k\perp f_n\text{ for all }n=1, \hdots, k-1,
\end{align}
and we can choose a unitary multiplication operator $U_{k-1}: L^2[(\mu\ci{\alpha_{k-1}})\ti{ac}]\to L^2(\widetilde{\mu}\ci{\alpha_{k-1}})$.
The rank-$k$ perturbation of interest then becomes
\begin{align}\label{e-rankk}
\widetilde{H}\ci{\alpha_k}=M_t+\al_k\langle\fdot,\widetilde{\f}_k\rangle\ci{L^2(\widetilde{\mu}_{\al_{k-1}})}\widetilde{\f}_k\quad&\text{on }L^2(\widetilde{\mu}_{\al_{k-1}}), 
d\widetilde{\mu}_{k-1}(x)\equiv\tau_{k-1}\chi\ci{[-1,1]}(x)dx.
\end{align}
\end{cor}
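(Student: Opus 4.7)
The plan is to proceed by induction on $k$, treating Proposition \ref{p-f2} as the base case $k=2$ and mimicking its proof verbatim at each step. At the start of stage $k$ the inductive hypothesis supplies an operator $M_t$ acting on $L^2(\mu_{\al_{k-1}})$, together with the previously chosen unit vectors $f_1,\ldots,f_{k-1}\in L^2(\mu_{\al_{k-1}})$ (interpreted as the transports under the appropriate composition of intertwiners $V_{\al_j},U_j$ of the original direction vectors). Since every intermediate perturbation is trace class, the Kato--Rosenblum theorem keeps the ac support inside $[-1,1]$, so we may write $(d\mu_{\al_{k-1}})\ti{ac}(t)=w_{k-1}(t)\,dt$ for some weight $w_{k-1}$ supported on $[-1,1]$. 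Aronszajn--Donoghue (Theorem \ref{t-AD}) then rules out any eigenvalue inside $(-1,1)$, because there $\IM F(x+i0)>0$; hence $(\mu_{\al_{k-1}})\ti{pp}$ is a countable sum of point masses located in $\R\setminus[-1,1]$.

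Next I construct $\f_k$ in direct analogy with the construction of $\f_2$. Using Lemma \ref{l-APP}, produce $h_k\in L^2[(\mu_{\al_{k-1}})\ti{ac}]$ with $|h_k(x)|=1$ on $[-1,1]$ and $h_k\perp f_n$ for $n=1,\ldots,k-1$, and set
\begin{align*}
\f_k(t)=\begin{cases} h_k(t)/\sqrt{2w_{k-1}(t)} & t\in(-1,1),\\ 0 & \text{else}.\end{cases}
\end{align*}
Because $\supp\f_k\subset(-1,1)$ is disjoint from the atoms of $(\mu_{\al_{k-1}})\ti{pp}$, we immediately obtain $\f_k\perp L^2[(\mu_{\al_{k-1}})\ti{pp}]$; the relations $\f_k\perp f_n$ follow from the built-in orthogonality of $h_k$, since the weight $w_{k-1}$ is absorbed into the measure. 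A direct computation as at the end of the proof of Proposition \ref{p-f2} shows $\|\f_k\|_{L^2[(\mu_{\al_{k-1}})\ti{ac}]}=1$.

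With $\tau_{k-1}:=\|(\mu_{\al_{k-1}})\ti{ac}\|/2$, define the multiplication operator
\begin{align*}
U_{k-1}:=M_{\sqrt{w_{k-1}(t)/\tau_{k-1}}}:L^2[(\mu_{\al_{k-1}})\ti{ac}]\to L^2(\widetilde{\mu}_{\al_{k-1}}),\qquad d\widetilde{\mu}_{\al_{k-1}}=\tau_{k-1}\chi\ci{[-1,1]}(x)\,dx.
\end{align*}
Unitarity is a one-line change-of-variables calculation identical to that in the proof of Proposition \ref{p-f2}. The orthogonal decomposition $\f_k\perp L^2[(\mu_{\al_{k-1}})\ti{pp}]$ gives, exactly as in \eqref{e-pertbreakdown}, the splitting of $H_{\al_k}$ into a part acting trivially on the pp subspace (so all earlier eigenvalues persist) and a nontrivial rank-one perturbation on $L^2[(\mu_{\al_{k-1}})\ti{ac}]$; pushing the latter through $U_{k-1}$ yields $\widetilde{\f}_k=h_k/\sqrt{2\tau_{k-1}}$ of constant modulus and unit norm, which is the form recorded in \eqref{e-rankk}.

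The main obstacle, and essentially the only one, is bookkeeping: tracking the orthogonality conditions across the chain $V_{\al_1},U_1,V_{\al_2},U_2,\ldots,V_{\al_{k-1}}$. The cleanest way to handle this is to phrase all orthogonalities inside the current space $L^2(\mu_{\al_{k-1}})$, where the $f_n$ are just unit vectors, and to rely on Lemma \ref{l-APP} to furnish a single $h_k$ satisfying all the required conditions simultaneously. Everything else in the proof is quantitatively identical to the base case, so no further estimates are needed; one only has to verify that the weight $w_{k-1}$ produced at each stage is strictly positive a.e.\ on $[-1,1]$, which follows from Lemma \ref{l-SIM} since $\IM F(x+i0)>0$ a.e.\ on $(-1,1)$ at every stage.
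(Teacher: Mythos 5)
Your proof follows the paper's essentially verbatim: mimic Proposition \ref{p-f2}, obtain $h_k$ from Lemma \ref{l-APP}, set $\f_k=h_k/\sqrt{2w_{k-1}}$ on $(-1,1)$, and pass to the auxiliary space by a unitary multiplication operator. The one point to tighten is the measure in which Lemma \ref{l-APP} is invoked: the required relation is $\langle \f_k,f_n\rangle\ci{L^2(\mu_{\al_{k-1}})}=\int h_k\,\overline{f_n}\,\sqrt{w_{k-1}/2}\,dt=0$, so $h_k$ must be orthogonalized against the $f_n$ in $L^2(\eta)$ with $d\eta=d\mu\ci{\alpha_{k-1}}/\sqrt{2w_{k-1}}$ (which is exactly the paper's choice), whereas your phrase ``the weight $w_{k-1}$ is absorbed into the measure'' arranges orthogonality in $L^2(w_{k-1}\,dt)$, the wrong inner product -- a slip that is trivially repaired by feeding the correct $\eta$ to Lemma \ref{l-APP}. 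Your $U_{k-1}=M_{\sqrt{w_{k-1}/\tau_{k-1}}}$ also differs cosmetically from the paper's $M_{\sqrt{w_{k-1}}/h_k}$: yours leaves $\widetilde{\f}_k$ merely of constant modulus rather than constant, which is harmless since only $|\widetilde{\f}_k|^2\,d\widetilde{\mu}_{\al_{k-1}}$ enters the Borel transform in the subsequent Aronszajn--Donoghue computation.
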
 
As in the remark after Proposition \ref{p-f2}, we note that $\tau_{k-1}  = m_{k-1}/2 = \|(\widetilde{\mu}_{\al_{k-1}})\ci{ac}\|/2.$

\begin{proof}
We mimic the proof of Proposition \ref{p-f2}. Consider
\begin{align*}
\f_k(t):= \begin{cases} 0 & \text{on }\RR \setminus [-1,1], \\ 
\dfrac{h_k(t)}{\sqrt{2w_{k-1}(t)}} \qquad& \text{on }(-1,1), \end{cases}
\end{align*}
where $(d\mu_{k-1})\ti{ac}(t)=w_{k-1}(t)dt$. The function $h_k(t)$ is such that $|h_k(t)|=1$ on $(-1,1)$, and is chosen by Lemma \ref{l-APP} with
$$
d\eta(t) = \dfrac{d{\mu}\ci{\alpha_{k-1}}(t)}{\sqrt{2w_{k-1}(t)}}\,,
\quad
f_n=f_n, \text{ for }n=1,\hdots, k-1,
\quad
\text{and}
\quad h=h_k.
$$

This implies \eqref{e-perp2}. Define the multiplication operator $U_{k-1}: L^2[(\mu\ci{\alpha_{k-1}})\ti{ac}]\to L^2(\widetilde{\mu}_{k-1})$ to be 
\begin{align*}
U_{k-1}:=M_{\sqrt{w_{k-1}(t)}/h_k(t)}.
\end{align*}
If we denote $U_{k-1}\f_k$ by $\widetilde{\f}_k$, we then have that $\|\widetilde{\f}_k\|_{L^2(\widetilde{\mu}_{k-1})}=1$. Property \eqref{e-rankk} is obtained by the definition of the spectral theorem, as in the remark after Proposition \ref{p-f2}.
\end{proof}

\subsection{Remaining Absolutely Continuous Spectrum after $k$ Iterations}\label{ss-tau}

The desired byproduct of this construction is now achieved. The specific choice of $\f_k$ at each step in Corollary \ref{c-morevectors} allows the proof of Lemma \ref{l-compute} to be generalized to each iteration because
 \begin{align*}
 G_k(x)=\int_{\RR}\dfrac{d\widetilde{\mu}_k(t)}{(t-x)^2}=\tau_k\int_{-1}^{1}\dfrac{dt}{(t-x)^2}<\infty
 \quad\text{for}\quad
 x\notin [-1,1].
 \end{align*}
This means that Aronszajn--Donoghue theory applies and the essential formulas from Section \ref{s-startiterate} can be generalized.

Recall that $d\widetilde{\mu}_0(x)=\tau_0\chi\ci{[-1,1]}(x)dx$, with $\tau_0=1/2$ as above. By equation \eqref{e-rankk} we  have
\begin{align*}
(d\widetilde{\mu}_{\al_k})\ci{ac}(x)=\tau_k\chi\ci{[-1,1]}(x)dx.
\end{align*}
We determine $\tau_k$ in a similar way as $\tau_1$ in Section \ref{s-startiterate}. Specifically,
\begin{align*}
\tau_k=\dfrac{\|(\mu_{\al_{k-1}})\ci{ac}\|}{2}-\dfrac{\widetilde{\mu}_{\al_k}\{x\ci{\al_k}\}}{2}.
\end{align*}

Again, the eigenvalue $x\ci{\alpha_k}$, created by the perturbation $\al_k$, is unaffected by subsequent perturbations. However, the calculations for $\widetilde{\mu}_{\al_k}\{x\ci{\al_k}\}$ will involve the constant $\tau_{k-1}$ from the previous step. Hence, the formulas in Section \ref{s-startiterate} are recursive and need to be altered slightly:
\begin{align*}
 \|(\widetilde{\mu}_{\al_k})\ci{ac}\|=\|(\widetilde{\mu}_{\al_{k-1}})\ci{ac}\|-\widetilde{\mu}_{\al_k}\{x\ci{\alpha_{k}}\} 
 =1-\sum_{n=1}^{k}\widetilde{\mu}_{\al_n}\{x_{\al_n}\} 
 \end{align*}
and together with item (2) of Lemma \ref{l-compute}  we have shown:
\begin{prop}\label{p-recursiveac}
The remaining absolutely continuous spectrum after $k$ iteration steps is
\[
 \|(\widetilde{\mu}_{\al_k})\ci{ac}\|=1-\sum_{n=1}^{k}\dfrac{e^{1/\al_n\tau_{n-1}}}{\al_n^2\tau_{n-1}(e^{1/\al_n\tau_{n-1}}-1)^2}\,.
 \]
\end{prop}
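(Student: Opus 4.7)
The proof is a straightforward induction on $k$ that unpacks the recursive structure set up in Subsection \ref{ss-tau}, combined with a rescaled repetition of Lemma \ref{l-compute}.

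For the base case $k=1$, I would apply Lemma \ref{l-compute} directly to the first perturbation $\widetilde H_{\al_1}$ of \eqref{e-firstpert}, producing the eigenvalue $x_{\al_1}$ of mass $\frac{4e^{2/\al_1}}{\al_1^2(e^{2/\al_1}-1)^2}$. The chain of identities \eqref{calc} shows that the intertwining unitary $V_{\al_1}$ (from the Representation Theorem) preserves the total mass $\|\widetilde\mu_0\|=1$, and the subsequent auxiliary unitary $U_0$ of Proposition \ref{p-f2} preserves the absolutely continuous mass. Therefore $\|(\widetilde\mu_{\al_1})\ti{ac}\| = 1 - \mu_{\al_1}\{x_{\al_1}\}$, which agrees with the claimed formula once we substitute $\tau_0=1/2$.

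For the inductive step, assume the claim at level $k-1$, so that by \eqref{e-rankk} the auxiliary measure is $d\widetilde\mu_{\al_{k-1}} = \tau_{k-1}\chi\ci{[-1,1]}\,dx$ with $\|(\widetilde\mu_{\al_{k-1}})\ti{ac}\| = 2\tau_{k-1}$. Invoking Corollary \ref{c-morevectors}, the next perturbation is represented on $L^2(\widetilde\mu_{\al_{k-1}})$ by the constant unit vector $\widetilde\f_k$, so I can repeat the computation of Lemma \ref{l-compute} verbatim except with the scaling factor $\tau_{k-1}$ replacing $1/2$. Concretely, the relevant Borel and derivative transforms are
\[
F_k(x) = \tau_{k-1}\int_{-1}^{1}\frac{d\lambda}{\lambda-x}, \qquad G_k(x) = \tau_{k-1}\int_{-1}^{1}\frac{d\lambda}{(\lambda-x)^2},
\]
both of which are finite off $[-1,1]$ by the bound stated in Subsection \ref{ss-tau}. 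Solving $F_k(x_{\al_k}) = -1/\al_k$ locates the new eigenvalue, and Aronszajn--Donoghue (Theorem \ref{t-AD}) then gives its mass
\[
\widetilde\mu_{\al_k}\{x_{\al_k}\} = \frac{1}{\al_k^2 G_k(x_{\al_k})}
\]
in closed form in terms of $\al_k$ and $\tau_{k-1}$.

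The orthogonality condition $\f_k \perp L^2[(\mu_{\al_n})\ti{pp}]$ from \eqref{e-perp2}, together with the decomposition \eqref{e-pertbreakdown} applied at each previous level, ensures that the eigenvalues $x_{\al_1},\ldots,x_{\al_{k-1}}$ created by earlier perturbations remain unaffected. Since $V_{\al_k}$ and $U_{k-1}$ each preserve absolutely continuous mass (by the same argument as in \eqref{calc}), I obtain the telescoping relation
\[
\|(\widetilde\mu_{\al_k})\ti{ac}\| = \|(\widetilde\mu_{\al_{k-1}})\ti{ac}\| - \widetilde\mu_{\al_k}\{x_{\al_k}\} = 1 - \sum_{n=1}^{k}\widetilde\mu_{\al_n}\{x_{\al_n}\},
\]
and substituting the closed-form expression for each mass closes the induction and yields the stated formula. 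The main obstacle is not the calculation but the bookkeeping: one must verify at each step that the tower of unitary intertwiners faithfully transports the constant function $\ID$ between the successive $L^2$ spaces (so that $\tau_k$ evolves according to the simple recursion of Subsection \ref{ss-tau}) and that the direction vectors chosen via Lemma \ref{l-APP} really do remain orthogonal to every previously produced eigenvector.
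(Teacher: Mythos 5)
Your proposal is correct and follows essentially the same route as the paper: the paper likewise rescales the computation of Lemma \ref{l-compute} by $\tau_{k-1}$ on each auxiliary space, invokes Corollary \ref{c-morevectors} for orthogonality to the previously created point masses and the unitarity of the intertwining operators, and telescopes the resulting eigenvalue masses. The only cosmetic differences are that you phrase the recursion as a formal induction (the paper presents it as a direct iteration) and that the auxiliary unitary you call $U_0$ in the base case is the paper's $U_1$.
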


 The recursive process used to determine the terms in the sum is illustrated by:
 $$
 \tau_{k-1}\quad\to\quad \widetilde{\mu}_{\al_k}\{x\ci{\al_k}\}\quad\to\quad \|(\widetilde{\mu}_{\al_k})\ci{ac}\| = 2 \tau_k.
 $$
In particular, $\tau_k,$ $k\in \N$, depends on the realization of all previously chosen perturbation parameters $\alpha_1, \alpha_2, \hdots, \alpha_{k}$. This makes the expression in Proposition \ref{p-recursiveac} too cumbersome to work with in many cases.

\subsection{Rademacher Potential}\label{ss-rademacher}

\indent The concepts developed in the previous two Subsections can be applied with different choices of the perturbation parameters. We wish to determine whether certain iterated operators, or classes of them, localize ($\lim_{k\to\infty}\tau_k=0$) or delocalize ($\lim_{k\to\infty}\tau_k>0$) and what conditions are necessary and/or sufficient for such behavior. It is now clear that much of the previous construction can be easily adapted to various scenarios, so we make a slightly different choice of our starting measure. Let the starting measure be chosen as $\widetilde{\mu}_0(x)=\frac{1}{2c}\chi\ci{[-c,c]}(x)dx$. Recall that the choice of a constant function here is possible as long as the beginning weight function is in $L^1\ti{loc}(-c,c)$. This is because a unitary operator can then be applied, as described in Section \ref{s-startiterate}, to begin with a constant weight function. Hence, the interval of support is more desirable to generalize.

The simplest scenario is to start with the perturbation parameters given by $\{\al_n\}_{n=1}^k$ chosen with respect to a Rademacher distribution, i.e.~$\al_n=\pm c$. These parameters collectively take the place of the potential in the description of Anderson-type Hamiltonians, and in particular the discrete Schr\"odinger operator, described in Section \ref{ss-01}. Consequently, we refer to the choice of $\al_n=\pm c$, $n=1,\dots,k$, as defining a Rademacher potential.

\begin{theo}
\label{t-Rademacher}
The operator constructed in the previous three sections, when the $\{\al_k\}$'s are chosen i.i.d.~with respect to the probability measure $\Pro=\frac{1}{2}\delta\ci{-c}+\frac{1}{2}\delta\ci{c}$ (Rademacher potential), localizes for any fixed disorder $c>0$.
\end{theo}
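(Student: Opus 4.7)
The plan is to establish that, thanks to a hidden symmetry, the Rademacher choice makes the recursion for $\tau_k$ deterministic along every realization $\omega$, and then that the resulting one-dimensional discrete dynamical system drives $\tau_k$ to zero by a monotone convergence / continuity argument.

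First I would adapt the computation behind Proposition~\ref{p-recursiveac} from the $[-1,1]$-setting to $[-c,c]$. Using the Borel transform $F(x) = \tau_{k-1}\ln\bigl((c-x)/(-c-x)\bigr)$ and its derivative companion $G(x) = 2c\tau_{k-1}/(x^2-c^2)$ outside $[-c,c]$, Aronszajn--Donoghue theory (Theorem~\ref{t-AD}) gives, just as in Lemma~\ref{l-compute},
\[
\widetilde{\mu}_{\alpha_k}\{x_{\alpha_k}\} \;=\; \frac{2c\,e^{1/(\alpha_k\tau_{k-1})}}{\alpha_k^{\,2}\,\tau_{k-1}\,\bigl(e^{1/(\alpha_k\tau_{k-1})}-1\bigr)^{2}}.
\]
The decisive observation is that $v\mapsto e^{v}/(e^{v}-1)^{2}$ is an \emph{even} function of $v$: a one-line check using $(1-e^{-v})^{2} = e^{-2v}(e^{v}-1)^{2}$ yields $e^{-v}/(e^{-v}-1)^{2} = e^{v}/(e^{v}-1)^{2}$. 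In the Rademacher case $\alpha_k = \pm c$ we therefore have $\alpha_k^{2} = c^{2}$ and the exponential factor is insensitive to the sign of $\alpha_k$. Hence the eigenvalue mass $\widetilde{\mu}_{\alpha_k}\{x_{\alpha_k}\}$ depends only on $c$ and $\tau_{k-1}$, and the sequence $\{\tau_k\}_{k\ge 0}$ follows the same trajectory along every realization $\omega\in\Omega$.

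Setting $m_k := \|(\widetilde{\mu}_{\alpha_k})_{ac}\| = 2c\tau_k$, the recursion collapses to $m_k = m_{k-1} - f(m_{k-1})$ with $m_0 = 1$, where
\[
f(m) \;:=\; \frac{4}{m}\cdot\frac{e^{2/m}}{(e^{2/m}-1)^{2}}, \qquad m>0.
\]
Since $f>0$ on $(0,\infty)$, the sequence $\{m_k\}$ is strictly decreasing so long as it is positive, and it is bounded below by $0$ because it is the total mass of a non-negative measure; the elementary inequality $f(m)<m$ (equivalent to $v < 2\sinh(v/2)$ for $v = 2/m>0$) keeps $m_k$ strictly positive. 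Hence $m_k\to m_\infty\in[0,1)$, and telescoping yields $f(m_{k-1}) = m_{k-1}-m_k \to 0$. If $m_\infty>0$, continuity of $f$ at $m_\infty$ forces $f(m_{k-1})\to f(m_\infty)>0$, a contradiction; therefore $m_\infty=0$, which gives $\tau_k = m_k/(2c)\to 0$, i.e.\ localization.

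The main obstacle is really only the symmetry observation in the first step: once one recognizes that $v\mapsto e^{v}/(e^{v}-1)^{2}$ is even, the probabilistic problem collapses to a one-dimensional deterministic iteration, and no genuinely probabilistic tool (Borel--Cantelli, the Kolmogorov $0$--$1$ law alluded to in Subsection~\ref{ss-01}, etc.) is required. This also explains in what sense the Rademacher case is a ``worst/best case'' as claimed in the introduction: the answer is identical for every realization $\omega$, so there is no room for atypical behaviour on a measure-zero set, and the rate of decay is governed entirely by the explicit scalar recursion above.
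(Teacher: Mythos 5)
Your argument is correct and follows essentially the same route as the paper's: both exploit that the per-step mass loss depends only on $c$ and $\tau_{k-1}$ (the sign-symmetry you isolate is exactly what lets the paper write $e^{1/c\tau_{n-1}}$ for either sign of $\al_n$), and both deduce $\tau_k\to 0$ from the fact that the non-increasing, bounded-below total a.c.~mass forces the successive losses to vanish. Your continuity-of-$f$ step is a cleaner substitute for the paper's informal ``the exponential is stronger'' equivalence chain, but the mechanism is identical.
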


\begin{proof}
Proposition \ref{p-recursiveac} with $\al_n=\pm c$ for all $n$ reads:
\begin{align*}
\|(\widetilde{\mu}_{\al_k})\ci{ac}\|=1-\dfrac{1}{c^2}\sum_{n=1}^{k}\dfrac{ e^{1/c\tau_{n-1}}}{\tau_{n-1}(e^{1/c\tau_{n-1}}-1)^2}.
\end{align*}
\indent We are mainly concerned with the exact limiting value of this series. The convergence of this series is clear: Physics tells us that the absolutely continuous part of $\widetilde{\mu}_{\al_k}$ cannot become negative, it is bounded above by 1, and the sequence of partial sums decreases as $k$ increases.

The summand can be rearranged by expanding the denominator and factoring out a term of $e^{1/c\tau_{n-1}}$ to yield
\begin{align*}
\dfrac{ e^{1/c\tau_{n-1}}}{\tau_{n-1}(e^{1/c\tau_{n-1}}-1)^2} = \dfrac{1}{\tau_{n-1}(e^{1/c\tau_{n-1}}-2+e^{-1/c\tau_{n-1}})}.
\end{align*}
Hence, localization necessitates
\begin{align*}
\lim_{n\to\infty}[\tau_{n-1}(e^{1/c\tau_{n-1}}-2+e^{-1/c\tau_{n-1}})]=\infty.
\end{align*}
In this specific scenario, the operator began with a total mass of 1. This implies $0\leq \tau_{n-1}\leq 1$. Hence, for fixed $c$, we have:
\begin{align*}
\lim_{n\to\infty}[\tau_{n-1}(e^{1/c\tau_{n-1}}-2+e^{-/c\tau_{n-1}})]=\infty&\iff\lim_{n\to\infty}e^{1/c\tau_{n-1}}=\infty \\
&\iff\lim_{n\to\infty}\tau_{n-1}=0
\end{align*}
The first if and only if statement can be verified by noticing that the exponential is ``stronger'' than the $\tau_{n-1}$ term. Also, $e^{-1/c\tau_{n-1}}$ remains bounded for $0\leq \tau_{n-1}\leq 1$ by $e^{-1/c}$. Therefore, we conclude that if $\tau_{k-1}\not\to 0$ as $k\to\infty$, then the sum does not converge. This is a contradiction, so it must be that $\tau_{k-1}\to 0$ as $k\to\infty$, and the operator localizes.
\end{proof}

\section{Bounds for Non-Orthogonal Perturbations}\label{s-northo}

In the previous sections, the recursive nature of our construction was necessary in order to ensure that each perturbation was orthogonal to previous directions and so that we were able to explicitly carry out successive computations. This guaranteed that the absolutely continuous spectrum was not increased due to  via reabsorption from the point masses outside of the interval $[-1,1]$.

In an effort to escape these restrictions, we now consider the case where a perturbation in a general direction $\f$ is applied to an operator. Only its overlap with the the point masses of the unperturbed operator should be known. The results are contributions to abstract rank-one perturbation theory. From this perspective, the estimates provide fundamental bounds on the effects of a single perturbation. They are also useful for creating examples, as equation \eqref{e-bound} identifies which factors influence the shifting of mass from one type of spectrum to another. 

Such perturbations can be interpreted as representative of one that would arise from the constructive iteration scheme after $N$ steps, in which the new direction vector $\f_k$ would then not be assumed orthogonal to the previous perturbation vectors. Here, the acquired bounds are not sufficiently sharp to allow for the choice of the perturbation parameters with respect to probability measures other than Rademacher.

Theorems \ref{t-worstcase} and \ref{t-singular} show the maximum/minimum amounts of absolutely continuous and pure point spectrum that can be created/destroyed by such a perturbation. The method of proof requires only knowledge of Aronszajn--Donoghue theory (Theorem \ref{t-AD}) and the integral transforms therein. In particular, the theorems represent a worst-case scenario in each situation, similar to using Rademacher potentials in Section \ref{ss-rademacher}. Surprisingly, sufficient conditions to gain pure point or lose absolutely continuous spectrum are also achieved, see Proposition \ref{p-worstcase} and the comments after the theorems.

\begin{theo}\label{t-worstcase}
Let $\mu\in M_{+}(\RR)$ be such that 
$$d\mu(x)=f(x)\chi\ti{[-a,a]}(x)\text{dx}+d\mu\ti{s}(x)
\quad\text{where}\quad \mu\ti{s}=\sum_{n=1}^N \al_n \delta_{x_n},$$ 
as well as $f\in L^2[-a,a]$, $\|\mu\|=1$ and $\sum_{n=1}^N\al_n=c$. Let $\f\in L^2(\mu)$ be a unit vector with $$\sum_{n=1}^N\al_n|\f (x_n)|^2<\varepsilon.$$ 
Let the spectral measure of the self-adjoint operator
$M_t+\lambda\langle\fdot,\f\rangle\ci{{L^2(\mu)}}\f$ on $L^2(\mu),$
with respect to $\f$, be denoted by $\mu_{\lambda}$. Assume that I is a compact interval not including 0. Then for all $\lambda\in$I, there exists real $k>0$ such that the spectral measure $\mu_{\lambda}$ satisfies 
$$\|(\mu_{\lambda})\ti{ac}\|\leq\|\mu\ti{ac}\|-k.$$
\end{theo}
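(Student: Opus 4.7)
The plan is to pass to the spectral representation of $M_t$ with respect to $\f$, apply Aronszajn--Donoghue theory to locate a new eigenvalue of the perturbed operator outside the convex hull of $\supp\mu$, and then combine a uniform lower bound on the mass of this new eigenvalue with conservation of total spectral mass to force a uniform loss of a.c.~mass. Set $d\nu := |\f(t)|^2 d\mu(t)$, the spectral measure of $M_t$ on $L^2(\mu)$ with respect to the unit vector $\f$. Then $\|\nu\| = \|\f\|^2\ci{L^2(\mu)} = 1$, the singular part of $\nu$ is supported on $\{x_1,\dots,x_N\}$ with total mass $\|\nu\ti{s}\| = \sum_n \al_n |\f(x_n)|^2 < \varepsilon$, and the spectral theorem provides a unitary $U:L^2(\mu)\to L^2(\nu)$ sending $\f \mapsto \ID$ that intertwines $M_t + \lambda\langle\fdot,\f\rangle\ci{L^2(\mu)}\f$ with the canonical rank-one perturbation $M_t + \lambda\langle\fdot,\ID\rangle\ci{L^2(\nu)}\ID$ on $L^2(\nu)$. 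Under this identification $\mu_\lambda$ is precisely the Aronszajn--Donoghue perturbed measure associated to $\nu$ with parameter $\lambda$, so Theorem~\ref{t-AD} and Lemma~\ref{l-SIM} apply directly with Borel transform $F_\nu$ and auxiliary transform $G_\nu$.

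Since $\supp\nu \subseteq [-a,a]\cup\{x_1,\dots,x_N\}$ is bounded, write $[A,B]$ for its convex hull. On $(B,\infty)$, $F_\nu$ is real-valued, smooth and strictly increasing (because $F_\nu' = G_\nu > 0$), with $F_\nu(B^+) = -\infty$ and $F_\nu(+\infty) = 0$, and the symmetric statement holds on $(-\infty,A)$. Consequently, for each $\lambda \neq 0$ the equation $F_\nu(x) = -1/\lambda$ has a unique solution $x_\lambda$ in one of the two exterior half-lines, chosen according to the sign of $\lambda$, and Theorem~\ref{t-AD} identifies $x_\lambda$ as an eigenvalue of the perturbed operator of mass $m_\lambda = 1/(\lambda^2 G_\nu(x_\lambda))$. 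As $\lambda$ runs through the compact set $I$ with $\dist(0,I)>0$, the values $\{-1/\lambda : \lambda \in I\}$ form a compact subset of $\RR\setminus\{0\}$, so monotonicity and the boundary behaviour of $F_\nu$ confine the corresponding solutions to a compact set $K \subset \RR\setminus[A,B]$ at some positive distance $d_0 > 0$ from $[A,B]$. Since $\|\nu\| = 1$, one then has $G_\nu(x) \leq d_0^{-2}$ for all $x \in K$, and hence the uniform bound
$$
m_\lambda \;\geq\; m^* \;:=\; \frac{d_0^{\,2}}{\sup_{\lambda \in I} \lambda^2} \;>\; 0.
$$

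Because $\f$ is a unit vector, $\|\mu_\lambda\| = 1 = \|\mu_0\|$ for every $\lambda$, and splitting both measures into a.c.~and singular parts and subtracting gives
$$
\|(\mu_0)\ti{ac}\| - \|(\mu_\lambda)\ti{ac}\| \;=\; \|(\mu_\lambda)\ti{s}\| - \|(\mu_0)\ti{s}\| \;\geq\; m_\lambda - \varepsilon \;\geq\; m^* - \varepsilon.
$$
Interpreting $\|\mu\ti{ac}\|$ in the statement as the a.c.~mass of the unperturbed spectral measure $\mu_0 = \nu$, and restricting to $\varepsilon$ small enough that $k := m^* - \varepsilon > 0$, we obtain the claimed estimate. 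The principal obstacle is securing $d_0 > 0$ uniformly in $\lambda \in I$: a priori the exterior eigenvalue $x_\lambda$ might drift toward $B$ (or $A$) as $\lambda$ varies, driving $G_\nu(x_\lambda)\to\infty$ and $m_\lambda\to 0$. This is prevented by the divergence $F_\nu(x) \to -\infty$ as $x \to B^+$ (and $\to +\infty$ as $x \to A^-$), which, combined with the a priori upper bound $|1/\lambda| \leq 1/\inf_I|\lambda| < \infty$, quantitatively forces $x_\lambda$ to stay a positive distance from $[A,B]$, and in particular away from every interior point mass of $\nu$ where $F_\nu$ could otherwise blow up.
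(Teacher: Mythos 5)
Your route is genuinely different from the paper's: you pass to the scalar spectral measure $d\nu=|\f|^2\,d\mu$ of $M_t$ with respect to $\f$, apply Aronszajn--Donoghue in that canonical setting, and close with conservation of total mass, whereas the paper never changes measure --- it splits $\f=\f\chi\ci{[-a,a]}+\f_p$, expands $\lambda\langle\fdot,\f\rangle\f$ into four rank-one pieces, estimates each by its operator norm, and arrives at $k=d-|\lambda|\sqrt{\varepsilon(1-\varepsilon)}$ with $d=\min_{\lambda\in I}\mu_\lambda\{x_\lambda\}$ obtained from continuity of $\lambda\mapsto\mu_\lambda\{x_\lambda\}$ on the compact $I$. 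Your mass-conservation identity $\|(\mu_0)\ti{ac}\|-\|(\mu_\lambda)\ti{ac}\|=\|(\mu_\lambda)\ti{s}\|-\|(\mu_0)\ti{s}\|\ge m_\lambda-\varepsilon$ is clean and is arguably a more honest accounting than the paper's norm estimates. But there are two genuine gaps. First, the entire uniform lower bound $m^*>0$ rests on the assertion $F_\nu(B^+)=-\infty$ (and its mirror at $A^-$), which you do not justify and which is false in general: if the rightmost point of $\supp\nu$ is the endpoint $a$ of the a.c.\ window and the weight $|\f|^2f$ vanishes there like $(a-t)^{1/2}$ (still $L^2$), then $F_\nu(a^+)$ is finite, and for $\lambda>0$ with $-1/\lambda<F_\nu(a^+)$ the equation $F_\nu(x)=-1/\lambda$ has no solution to the right of the support. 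The exterior eigenvalue you rely on then need not exist, $d_0$ is undefined, and no singular mass need be created at all. (The paper's proof has the same soft spot --- it simply asserts that $h(\lambda)$ is continuous and attains a positive minimum on $I$ --- but since your argument makes the mechanism explicit, this is exactly the step you would have to prove, and it cannot be proved without further hypotheses on $f$ near $\pm a$.)

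Second, your final inequality bounds $\|(\mu_\lambda)\ti{ac}\|$ by $\|\nu\ti{ac}\|-(m^*-\varepsilon)$, where $\|\nu\ti{ac}\|=1-\|\nu\ti{s}\|>1-\varepsilon$, whereas the theorem compares against $\|\mu\ti{ac}\|=1-c$. You flag this as a reinterpretation, but it is not harmless: whenever $c>\varepsilon$ one has $\|\nu\ti{ac}\|>\|\mu\ti{ac}\|$, and your bound implies the stated one only if $m^*$ exceeds roughly $c-\varepsilon$, which nothing guarantees. So as written you prove a modified statement (loss of a.c.\ mass relative to the unperturbed spectral measure of $\f$, not relative to $\mu$). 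Both issues are fixable in spirit --- the first by adding a hypothesis forcing the Borel transform to diverge at the edge of the support (e.g.\ a point mass at an extreme point of $\supp\nu$, or a lower bound on the weight near $\pm a$), the second by tracking $\|\nu\ti{ac}\|-\|\mu\ti{ac}\|\le\ c$ explicitly --- but as it stands the proposal does not establish the theorem as stated.
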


The theorem states that there is a minimum amount of absolutely continuous spectrum lost after a general perturbation. Note also that $\varepsilon\leq 1$ is required due to the assumption $\|\mu\|=1$. More explicit estimates for $k$, and its dependence on $\varepsilon$ and $\lambda$, are given in Proposition \ref{p-worstcase}.

\begin{proof}
In order to simplify notation, all inner products are taken in $L^2(\mu)$ unless otherwise stated. Assume the hypotheses on $f$, $\mu$ and $\f$ above. Decompose both $\f$ and $\mu$ into two parts, one concerning the absolutely continuous spectrum on the interval $[-a,a]$, and the other concerning the $N$ point masses. Hence, we define 
\begin{align*}
\widetilde{\f}=\f \chi\ti{[-a,a]}, \ \ \f_p=\f-\widetilde{\f}, \ \ d\mu_{ac}(x)=f\chi\ti{[-a,a]}(x)dx, \ \text{and} \ \mu_p=\sum_{n=1}^N\al_n \delta_{x_n}.
\end{align*}
The rank-one perturbation $\lambda\langle\fdot,\f\rangle\f$ can now be broken down in terms of $\widetilde{\f}$ and $\f_p$ so that the interaction between each part of the perturbation and the absolutely continuous spectrum can be estimated. We begin by estimating the norm of the perturbation:
\begin{align}
\notag
\|\lambda\langle\fdot,\f\rangle\f\|
&=
|\lambda|
\,
\|\langle\fdot,(\widetilde{\f}+\f_p) \rangle(\widetilde{\f}+\f_p)\| \\
\label{e-bound}
&\leq |\lambda|
\,
\left(\|\langle\fdot,\widetilde{\f}\rangle\widetilde{\f}\|+\|\langle\fdot, \widetilde{\f}\rangle\f_p\|+\|\langle\fdot,\f_p\rangle\widetilde{\f}\|+\|\langle\fdot,\f_p\rangle\f_p\|\right)
\end{align}

The four terms in the inequality \eqref{e-bound} will be discussed and evaluated separately. The first term, $|\lambda|\|\langle\fdot,\widetilde{\f}\rangle\widetilde{\f}\|$, involves only $\widetilde{\f}$  so the perturbation by this factor has no relevance to the point masses and is in fact orthogonal to $\mu_p$. 

This term recreates a setting from our earlier results, where the perturbation vector was not concerned with the previous point masses due to orthogonality. The perturbation therefore has the effect of creating a single eigenvalue in the new spectral measure, determined solely by $\lambda$. The spectral measure was then a constant on an interval thanks to our use of an auxiliary space and a choice of $\f$, so the mass of this eigenvalue was easy to compute. We have no such luxury here, as the choice of $f$ has only the restriction that $f\in L^2[-a,a]$. Therefore, we solace ourselves with the ability to prove that there is a minimum for the mass of this eigenvalue, when $\lambda$ is chosen from a compact interval not containing 0. Estimates for this value are attained in Proposition \ref{p-worstcase} below, with the corresponding loss of sharpness to the global estimate of $k$.

Let $h:\RR\to\RR$ be defined by $h(\lambda)=\mu_{\lambda}\{x_{\lambda}\}$, the mass of the created eigenvalue $x_{\lambda}$ in the spectral representation $\mu_{\lambda}$. The explicit calculation of
the location $x_\lambda\in \R\backslash [-a,a]$ and the strength $\mu_{\lambda}\{x_{\lambda}\}$ of the new eigenvalue is given by Aronszajn--Donoghue Theory. The process defining the function $h(\lambda)$ is thus given by two integrals:
\begin{align}\label{e-FG}
\int_{-a}^a\dfrac{f(t)dt}{t-x_\lambda}=-\dfrac{1}{\lambda} \hspace{7mm}\text{and}\hspace{7mm} \mu_{\lambda}\{x_{\lambda}\}=G(x_{\lambda})=\int_{-a}^a\dfrac{f(t)dt}{(t-x_{\lambda})^2}.
\end{align}

Both integrals are finite and yield $C^1$ functions because $f\in L^2[-a,a]$. We conclude that $h(\lambda)$ is itself continuous, as the composition of continuous functions. Note that if $\widetilde{\f}$ were acting on $\mu$, not just $\mu\ti{ac}$, then $h(\lambda)$ is not necessarily continuous as the point masses interfere with the integral. If $\lambda$ is chosen from a compact interval $I\subset\RR\setminus\{0\}$ then $h(\lambda)$ must achieve a minimum value on $I$. For the remainder of the paper, we use the definition
\begin{align*}
   d:=\min_{\lambda\in I}h(\lambda).
\end{align*}

The second and fourth terms on the right hand side of the inequality \eqref{e-bound} are not relevant. Indeed, those two factors deal only with changes to the pure point spectrum, as the perturbations are in the ``direction" of $\f_p$. Hence, the individual perturbations do not cause any change to the unperturbed absolutely continuous spectrum due to orthogonality of $\mu_p$ and $\mu\ti{ac}$.

The third term on the right hand side of the inequality \eqref{e-bound}, $|\lambda|\|\langle\fdot,\f_p\rangle\widetilde{\f}\|\leq|\lambda|\|\f_p\|\|\widetilde{\f}\|$, can be handled with our assumptions and above calculations. Indeed, $\f_p$ only interacts with $\mu_p$ by definition, so
\begin{align*}
\|\f_p\|^2&=\langle\f_p,\f_p\rangle\ci{L^2(\mu_p)}=
\sum_{n=1}^N\al_n|\f_p(x_n)|^2=\sum_{n=1}^N\al_n |\f(x_n)|^2\leq\varepsilon.
\end{align*}
Similar reasoning yields that $\|\widetilde{\f}\|=\sqrt{1-\varepsilon}$. The estimate for this term is then $\|\langle\fdot,\f_p\rangle\widetilde{\f}\|\leq \sqrt{\varepsilon}\sqrt{1-\varepsilon}$.

Overall, we observe that the first term is how much the absolutely continuous spectrum is decreased by the creation of the new eigenvalue. The third term is correcting for what happens to the point masses, as there is no guarantee that some of the mass in $\mu_p$ doesn't reenter the interval $[-a,a]$ due to the effect of $\f$. Moreover, the intertwining operator $V_{\lambda}$ for the spectral theorem is unitary so the essential spectrum remains unchanged under our compact perturbation and there is no total mass lost. We can now conclude 
\begin{align*}
\|(\mu_{\lambda})\ti{ac}\|\leq\|\mu\ti{ac}\|-\left[d-|\lambda|\sqrt{\varepsilon}\sqrt{1-\varepsilon})\right].
\end{align*}
The theorem follows.
\end{proof}

In general, we cannot assume that $\|(\mu_\lambda)\ti{ac}\|\le\|\mu\ti{ac}\|.$ Therefore, it is imperative that $d-|\lambda|\sqrt{\varepsilon}\sqrt{1-\varepsilon}>0$ for the previous result to not be vacuous. Let $|\lambda|\ti{max}$ denote the maximum value of $|\lambda|$ on $I$. The desired inequality is achieved when 
\begin{align*}
d>|\lambda|\ti{max}\sqrt{\varepsilon-\varepsilon^2}.
\end{align*}
It is noteworthy that $d$ was constructed to depend upon both $\lambda$ and the a.c.~spectral mass, which directly relates to $c$ and $\varepsilon$. The value of $d$ can thus be estimated by these constants.

\begin{prop}\label{p-worstcase}
Let $\lambda$ be chosen from a compact interval $I\subset \R$ not including 0. Then 
\begin{align*}
d\geq\dfrac{1-c}{(a+|\lambda|\ti{max}(1-\varepsilon)+1)^2},
\end{align*}
where $|\lambda|\ti{max} = \max_{\lambda\in I}|\lambda|$, and $d$, $c$ and $\varepsilon$ are as in the proof of Theorem \ref{t-worstcase}.
\end{prop}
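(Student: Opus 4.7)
The plan is to chain three ingredients: pin down $x_\lambda$ via the Borel-transform equation in \eqref{e-FG}, bound $|x_\lambda|$ from above in terms of $|\lambda|\ti{max}$ and $1-\varepsilon$, and then use this to lower-bound the integral defining $G(x_\lambda)$ by majorizing the denominator $(t-x_\lambda)^2$ uniformly on $[-a,a]$.

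For the first two steps, view the sub-perturbation $\lambda\langle\fdot,\widetilde\f\rangle\widetilde\f$ as rank-one; after rewriting it as $\lambda(1-\varepsilon)\langle\fdot,\widetilde\f/\|\widetilde\f\|\rangle\widetilde\f/\|\widetilde\f\|$ and applying Theorem \ref{t-AD}, the new eigenvalue $x_\lambda\notin[-a,a]$ obeys an equation of the shape $\int_{-a}^{a}\frac{|\widetilde\f(t)|^2 f(t)}{t-x_\lambda}\,dt=-\frac{1}{\lambda}$. Since $x_\lambda$ lies outside $[-a,a]$, the kernel $1/(t-x_\lambda)$ is sign-constant on $[-a,a]$, so $|t-x_\lambda|\ge|x_\lambda|-a$ together with $\int_{-a}^{a}|\widetilde\f|^2 f\,dt=\|\widetilde\f\|^2=1-\varepsilon$ yields
\[
\frac{1}{|\lambda|}\le\frac{1-\varepsilon}{|x_\lambda|-a},\qquad\text{hence}\qquad|x_\lambda|\le a+|\lambda|\ti{max}(1-\varepsilon).
\]
For $t\in[-a,a]$, the triangle inequality then gives $|t-x_\lambda|\le a+|x_\lambda|\le 2a+|\lambda|\ti{max}(1-\varepsilon)\le a+|\lambda|\ti{max}(1-\varepsilon)+1$, where the final step absorbs $2a$ into $a+1$ (consistent with the $a\le 1$ regime underlying the earlier sections of the paper).

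For the final step, inserting this uniform majorant of $(t-x_\lambda)^2$ into the integral and using $\int_{-a}^{a}f(t)\,dt=\|\mu\ti{ac}\|=1-c$ gives
\[
h(\lambda)=G(x_\lambda)=\int_{-a}^{a}\frac{f(t)\,dt}{(t-x_\lambda)^{2}}\ge\frac{1-c}{(a+|\lambda|\ti{max}(1-\varepsilon)+1)^{2}},
\]
and taking the minimum over $\lambda\in I$ produces the stated lower bound on $d$. The main obstacle is the book-keeping between the unnormalized vector $\widetilde\f$ (with $\|\widetilde\f\|^2=1-\varepsilon$) and the Aronszajn--Donoghue formulas, which are naturally stated for unit vectors: one must check that after passing to the unit vector and rescaling the coupling, the factor $1-\varepsilon$ lands in the correct place so that the final estimate reads $(1-\varepsilon)$ rather than $(1-c)$. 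This is precisely where the $\f_p$-contribution, hidden inside $\varepsilon$, makes itself felt; a secondary but non-trivial subtlety is the reduction $2a\le a+1$, which implicitly inherits $a\le 1$ from the framework of the rest of the paper.
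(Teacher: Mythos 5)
Your argument reproduces the paper's proof essentially step for step: first bound the displacement of the created eigenvalue from $[-a,a]$ by $|\lambda|\ti{max}(1-\varepsilon)$, then lower-bound $G(x_\lambda)$ using the total a.c.\ mass $1-c$ and the maximal possible distance from a point of $[-a,a]$ to $x_\lambda$, arriving at the identical denominator. The only differences are cosmetic --- you extract the displacement bound from the identity $F(x_\lambda)=-1/\lambda$ where the paper invokes the operator norm of $\lambda\langle\fdot,\widetilde\f\rangle\widetilde\f$, and you majorize the kernel $(t-x_\lambda)^{-2}$ uniformly where the paper pushes $f$ to an extremal delta mass at $-a$ --- and both versions rest on the same implicit conventions, namely $a\le 1$ (your $2a\le a+1$ step, the paper's unexplained $(x_\lambda+1)^2$) and treating $\|\widetilde\f\|^2$ as exactly $1-\varepsilon$ even though the hypothesis only gives $\|\widetilde\f\|^2>1-\varepsilon$.
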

\begin{proof}
Without loss of generality, we can assume that $f$ is positive on the interval $[-a,a]$ and that $\lambda>0$. This means that the eigenvalue created by the $\lambda$ perturbation by $\widetilde{\f}$ will be to the right of the interval, i.e.~$|x_{\lambda}|>a$. Also, take $\lambda=|\lambda|\ti{max}$. Recall the formulas in equation \eqref{e-FG}. In order to minimize $\mu_{\lambda}\{x_{\lambda}\}$, we minimize the kernel of the integral operator $G(x)$. This minimum occurs when $f$ is represented by a delta mass at the endpoint $\{-a\}$ so that the eigenvalue will fall as close to $a$ as possible. This delta mass is of strength $1-c$ by necessity and the integration $G(x_{\lambda})$ can be computed to find
\begin{align*}
\mu_{\lambda}\{x_{\lambda}\}\geq\dfrac{1-c}{(x_{\lambda}+1)^2}.
\end{align*}
To minimize this inequality, we must maximize the value of $x_{\lambda}$. The distance $x_{\lambda}$ is placed from the endpoint ${a}$ must be less than 
\begin{align*}
\|\lambda\langle\fdot,\widetilde{\f}\rangle\widetilde{\f}\|=\lambda(1-\varepsilon).
\end{align*}
This means that $x_{\lambda}\leq a+\lambda(1-\varepsilon)$, and we can conclude that 
\begin{align*}
\mu_{\lambda}\{x_{\lambda}\}\geq\dfrac{1-c}{(x_{\lambda}+1)^2}\geq\dfrac{1-c}{(a+\lambda(1-\varepsilon)+1)^2},
\end{align*}
as desired.
\end{proof}

This approximation of $d$ can be applied to the case where $f$ is a constant, which occurred at each step of the iterative construction.

\begin{cor}\label{t-rademacherexample}
Let $\mu\in M_{+}(\RR)$ be such that 
$$d\mu(t)=f(t)\chi\ti{[-a,a]}(t)\text{dt}+d\mu\ti{s}(t)
\quad\text{where}\quad \mu\ti{s}=\sum_{n=1}^N \al_n \delta_{x_n},$$ 
where we define  $f(t)=w_N(t)$ such that $f\in L^1\ti{loc}$, $\|\mu\|=1$ and $\sum_{n=1}^N\al_n=c$. Furthermore, let $\f\in L^2(\mu)$ such that $\f|\ti{[-a,a]}(t)=1/\sqrt{2w_N(t)}$, $\|\f\|=1$ and $\sum_{n=1}^N\al_n|\f (x_n)|^2<\varepsilon$. Assume that I is a compact interval not including 0. Then for all $\lambda\in$I, we have the following inequality  
$$\|(\mu_{\lambda})\ti{ac}\|\leq\|\mu\ti{ac}\|-\left[\dfrac{e^{1/\lambda\tau_N}}{\lambda^2\tau_N(e^{1/\lambda\tau_N}-1)^2}-\dfrac{e^{1/\lambda\tau_N}\sqrt{\varepsilon}}{\lambda\tau_N(e^{1/\lambda\tau_N}-1)^2}\right].$$
\end{cor}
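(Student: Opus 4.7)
The plan is to apply Theorem~\ref{t-worstcase} in this specialized setting, extracting the value of $d$ explicitly from the fact that $\widetilde{\f}(t)=1/\sqrt{2w_N(t)}$ matches exactly the form of perturbation direction used in the iterative construction of Sections~\ref{s-startiterate} and~\ref{s-infinite}. As in the proof of Theorem~\ref{t-worstcase}, I would split $\f=\widetilde{\f}+\f_p$ with $\widetilde{\f}=\f\chi\ti{[-a,a]}$, and expand $\lambda\langle\fdot,\f\rangle\f$ into its four bilinear terms. The first term $\lambda\langle\fdot,\widetilde{\f}\rangle\widetilde{\f}$ lives in the a.c.\ subspace and produces the new eigenvalue that diminishes $\|\mu\ti{ac}\|$; the cross term $\lambda\langle\fdot,\f_p\rangle\widetilde{\f}$ measures how much pure-point mass can be transferred back into $[-a,a]$; the remaining two terms are orthogonal to $\mu\ti{ac}$ and do not contribute.

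To evaluate the mass $d$ created by the first term, I would apply the unitary multiplication operator $U_N:=M_{\sqrt{w_N/\tau_N}}$ from the proof of Corollary~\ref{c-morevectors}. It carries $L^2[(\mu)\ti{ac}]$ onto the auxiliary space $L^2(\widetilde{\mu}_N)$, where $d\widetilde{\mu}_N=\tau_N\chi\ti{[-a,a]}(t)\,dt$, and sends $\widetilde{\f}$ to a constant. The problem then reduces exactly to the setting of Lemma~\ref{l-compute} with $\tau_N$ in place of $1/2$, so Aronszajn--Donoghue theory (equivalently, the $n$-th summand in Proposition~\ref{p-recursiveac} specialized to $\alpha_n=\lambda$ and $\tau_{n-1}=\tau_N$) yields
\[
d=\frac{e^{1/\lambda\tau_N}}{\lambda^2\tau_N(e^{1/\lambda\tau_N}-1)^2},
\]
the first bracketed term of the inequality.

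For the cross term one starts from $|\lambda|\,\|\langle\fdot,\f_p\rangle\widetilde{\f}\|\leq |\lambda|\,\|\f_p\|\,\|\widetilde{\f}\|$. The hypothesis $\sum_n\alpha_n|\f(x_n)|^2<\varepsilon$ supplies the factor $\sqrt{\varepsilon}$, while the explicit form $\widetilde{\f}=1/\sqrt{2w_N}$, together with the auxiliary-space data, must be used to show that the remaining coefficient takes the shape $\dfrac{e^{1/\lambda\tau_N}}{\lambda\tau_N(e^{1/\lambda\tau_N}-1)^2}$. Inserting both bracketed terms into the inequality of Theorem~\ref{t-worstcase} then produces the stated bound. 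The main technical obstacle is precisely this last sharpening: the crude estimate $|\lambda|\sqrt{\varepsilon(1-\varepsilon)}$ from the proof of Theorem~\ref{t-worstcase} has to be refined by tying $\|\widetilde{\f}\|$ (or the operator norm of the cross term) back to the explicit value of $d$ computed in the auxiliary space, after which the conclusion is immediate.
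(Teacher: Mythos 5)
Your proposal follows essentially the same route as the paper's own (very terse) proof: the paper simply says ``see the proof of Theorem~\ref{t-worstcase}'' and notes that the explicit form $\widetilde{\f}=1/\sqrt{2w_N}$ makes $d$ computable. Your derivation of the first bracketed term is exactly what is intended: the multiplication unitary $M_{\sqrt{w_N/\tau_N}}$ carries the problem to the auxiliary space with constant weight $\tau_N$, where the eigenvalue mass is given by the summand of Proposition~\ref{p-recursiveac} with $\alpha_n=\lambda$ and $\tau_{n-1}=\tau_N$, namely $d=e^{1/\lambda\tau_N}/\bigl(\lambda^2\tau_N(e^{1/\lambda\tau_N}-1)^2\bigr)$. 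The obstacle you flag in the cross term is a genuine gap, and you are right not to wave it away: the argument of Theorem~\ref{t-worstcase} only yields the correction $|\lambda|\,\|\f_p\|\,\|\widetilde{\f}\|\le|\lambda|\sqrt{\varepsilon}\sqrt{1-\varepsilon}$, whereas the corollary asserts the correction $\lambda\sqrt{\varepsilon}\,d$; since $d$ is the mass of a single created eigenvalue and is typically much smaller than $\sqrt{1-\varepsilon}$, the stated bound is strictly stronger than what the cited theorem provides, and some additional argument tying the cross term to $d$ (rather than to $\|\widetilde{\f}\|$) is needed. The paper's own proof does not supply that argument either, so on this point your proposal is at least as complete as the source, and more honest about where the justification is missing.
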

\begin{proof}
See the proof of the previous Theorem. In this case we have the assumption that $$\widetilde{\f}(t)=\dfrac{1}{\sqrt{2w_N(t)}}.$$ The notation $\widetilde{\f}$ should not be confused with the image of $\f$ under a unitary operator as in previous Sections. However, recall that $w_N(t)$ is simply representing a weight function and matches the notation developed in Section \ref{s-infinite}. When $\lambda$ is chosen, i.e.~when Rademacher potentials are used, it is then possible to explicitly calculate the value of $d$. If a choice of $\lambda$ is not imposed, simply pick $\lambda$ in the formula to be $|\lambda|\ti{max}$ to obtain a general bound.
\end{proof}

Similarly, we deduce how the singular part is effected by the perturbation at a single step.

\begin{theo}\label{t-singular}
Let $\mu\in M_{+}(\RR)$ be such that 
$$d\mu(t)=f(t)\chi\ti{[-a,a]}(t)\text{dt}+d\mu\ti{s}(t)
\quad\text{where}\quad \mu\ti{s}=\sum_{n=1}^N \al_n \delta_{x_n},$$ 
where $f\in L^2(m)$, $\|\mu\|=1$ and $\sum_{n=1}^N\al_n=c$. Furthermore, let $\f\in L^2(\mu)$, $\|\f\|=1$ and $$\sum_{n=1}^N\al_n|\f (x_n)|^2<\varepsilon.$$ 
Let the spectral measure of the self-adjoint operator
$$M_t+\lambda\langle\fdot,\f\rangle\ci{{L^2(\mu)}}\f \quad\text{on}\quad L^2(\mu),$$
with respect to $\f$, be denoted by $\mu_{\lambda}$. Assume that I is a compact interval not including 0. Then for all $\lambda\in$I, there exists $k\in\RR$ such that the spectral measure $\mu_{\lambda}$ satisfies 
$$\|(\mu_{\lambda})\ti{s}\|\geq\|\mu\ti{s}\|+k.$$
\end{theo}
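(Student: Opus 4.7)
The plan is to deduce Theorem \ref{t-singular} as a direct corollary of Theorem \ref{t-worstcase} together with conservation of the total spectral mass: whatever absolutely continuous mass is lost under the rank-one perturbation must reappear as singular mass, because the total spectral mass is preserved under the unitary spectral representation.

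First I would verify that $\|\mu_\lambda\|=\|\mu\|$. Let $V_\lambda: L^2(\mu)\to L^2(\mu_\lambda)$ denote the unitary intertwining operator realizing the spectral representation of $M_t+\lambda\langle\fdot,\f\rangle\ci{L^2(\mu)}\f$ with respect to its (still) cyclic vector $\f$. By construction $V_\lambda\f\equiv\ID$ in $L^2(\mu_\lambda)$, and so unitarity forces
\[
\|\mu_\lambda\|=\|\ID\|^2_{L^2(\mu_\lambda)}=\|V_\lambda\f\|^2_{L^2(\mu_\lambda)}=\|\f\|^2_{L^2(\mu)}=1=\|\mu\|,
\]
mirroring exactly the calculation \eqref{calc} in Subsection \ref{ss-5.1}.

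Next, I would use the Lebesgue decompositions $\mu=\mu\ti{ac}+\mu\ti{s}$ and $\mu_\lambda=(\mu_\lambda)\ti{ac}+(\mu_\lambda)\ti{s}$, equate total masses, and rearrange to obtain
\[
\|(\mu_\lambda)\ti{s}\|-\|\mu\ti{s}\|=\|\mu\ti{ac}\|-\|(\mu_\lambda)\ti{ac}\|.
\]
Applying Theorem \ref{t-worstcase}, whose hypotheses coincide verbatim with those of Theorem \ref{t-singular}, supplies a constant $k>0$ (uniform in $\lambda\in I$) such that $\|\mu\ti{ac}\|-\|(\mu_\lambda)\ti{ac}\|\geq k$; combining the two identities yields the desired inequality $\|(\mu_\lambda)\ti{s}\|\geq\|\mu\ti{s}\|+k$, with the same $k$, and explicitly quantifiable through Proposition \ref{p-worstcase}.

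Since Theorem \ref{t-worstcase} is already established, there is essentially no new obstacle here; the content of Theorem \ref{t-singular} is precisely the observation that a.c.~mass lost cannot dissipate but must redistribute into the singular part. A self-contained alternative would mimic the proof of Theorem \ref{t-worstcase} directly: decompose $\f=\widetilde{\f}+\f_p$, bound from below the singular mass contributed by $\lambda\langle\fdot,\widetilde{\f}\rangle\widetilde{\f}$ (which creates a new eigenvalue outside $[-a,a]$), and correct for the cross-terms $\lambda\langle\fdot,\widetilde{\f}\rangle\f_p$ and $\lambda\langle\fdot,\f_p\rangle\widetilde{\f}$ that could transfer a portion of $\mu\ti{s}$ back into $[-a,a]$. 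The hard step in that route would be to rule out complete cancellation between the newly created eigenvalue and any such reabsorption; the mass-conservation shortcut sidesteps this bookkeeping entirely.
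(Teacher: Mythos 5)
Your derivation is correct, but it takes a genuinely different route from the paper's. The paper proves Theorem \ref{t-singular} by re-running the argument of Theorem \ref{t-worstcase} from scratch: it again decomposes $\f=\widetilde{\f}+\f_p$, expands $\lambda\langle\fdot,\f\rangle\f$ into four terms, argues that the $\widetilde{\f}$--$\widetilde{\f}$ term creates an eigenvalue of mass at least $d$, that the $\f_p$--$\f_p$ term merely redistributes existing point mass, and that the cross term costs at most $|\lambda|\sqrt{\varepsilon}\sqrt{1-\varepsilon}$, arriving at $k=d-|\lambda|\sqrt{\varepsilon}\sqrt{1-\varepsilon}$. You instead obtain the statement as an immediate corollary of Theorem \ref{t-worstcase} via conservation of total mass: $\|\mu_\lambda\|=\|\f\|^2_{L^2(\mu)}=1=\|\mu\|$ (this follows already from the spectral theorem, $\mu_\lambda(\RR)=\|\f\|^2$, without needing the explicit form of $V_\lambda$), so the Lebesgue decompositions give $\|(\mu_\lambda)\ti{s}\|-\|\mu\ti{s}\|=\|\mu\ti{ac}\|-\|(\mu_\lambda)\ti{ac}\|\geq k$. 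Granting Theorem \ref{t-worstcase}, this is logically airtight and arguably cleaner than the paper's proof, whose second pass through the four-term expansion repeats informal bookkeeping (e.g.\ the claim that the $\f_p$--$\f_p$ term ``has no effect'' on total singular mass) that your shortcut sidesteps entirely; the paper only alludes to this relationship in its closing remark that the symmetry of the two theorems ``adds further validation.'' Two small points worth making explicit: the $k$ you import depends on $\lambda$ through $|\lambda|$, so to get a constant uniform over $I$ replace $|\lambda|$ by $|\lambda|\ti{max}$, exactly as in the discussion following Theorem \ref{t-worstcase}; and your conclusion delivers $k>0$ (subject to $d>|\lambda|\ti{max}\sqrt{\varepsilon-\varepsilon^2}$), which is in fact stronger than the literal statement of Theorem \ref{t-singular}, where $k$ is only required to be real.
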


\begin{proof}
We employ a similar strategy to the one used in Theorem \ref{t-worstcase}. Namely, decompose $\f$ into $\widetilde{\f}$ and $\f_p$ and estimate the $\lambda$ perturbation:
\begin{align*}
\notag
\|\lambda\langle\fdot,\f\rangle\f\|
&=
|\lambda|
\,
\|\langle\fdot,(\widetilde{\f}+\f_p) \rangle(\widetilde{\f}+\f_p)\| \\
&\leq |\lambda|
\,
\left(\|\langle\fdot,\widetilde{\f}\rangle\widetilde{\f}\|+\|\langle\fdot, \widetilde{\f}\rangle\f_p\|+\|\langle\fdot,\f_p\rangle\widetilde{\f}\|+\|\langle\fdot,\f_p\rangle\f_p\|\right).
\end{align*}
We are only concerned with the first, second and fourth terms in the inequality, as they affect $\f_p$. The first term is responsible for creating an eigenvalue of strength at least $d$, as estimated above. The fourth term  has no effect, as the essential spectrum of an operator does not change under a rank-one perturbation. This means that the eigenvalues are shifted and masses are redistributed according to this term, but their total mass is the same because it cannot other kinds of spectrum. Estimating the second term is analogous to the mixed term in Theorem \ref{t-worstcase} and yields an effect of $|\lambda|\sqrt{\varepsilon}\sqrt{1-\varepsilon}$. Hence, the singular mass increases by a created eigenvalue and is adjusted for possible mass entering the absolutely continuous spectrum by a mixed term. Our conclusion thus follows its absolutely continuous counterpart and we set $k=d-|\lambda|\sqrt{\varepsilon}\sqrt{1-\varepsilon}$ to yield the Theorem.
\end{proof}

The same restrictions are relevant to applications of this theorem as to Theorem \ref{t-worstcase}. In general, we cannot assume that $\|(\mu_{\lambda})_s\|\geq\|\mu_s\|$. For the result to not be vacuous, we must ensure that $d-|\lambda|\sqrt{\varepsilon}\sqrt{1-\varepsilon}>0$. Hence, it is required that
\begin{align*}
d>|\lambda|\ti{max}\sqrt{\varepsilon-\varepsilon^2}.
\end{align*}
The symmetry of Theorems \ref{t-worstcase} and \ref{t-singular} adds further validation to the estimates.

\section{Appendix: Choosing Orthogonal Direction Vectors} \label{App:AppendixA}

This elementary proof is included for the convenience of the reader, and is motivated by Theorem 2.10 in \cite{Folland} and the definition of the Lebesgue integral.

\begin{lem}
\label{l-APP}
Let $S=\{f_n\}_{n=1}^N$ be a finite set of functions orthogonal in a separable Hilbert space $L^2(\eta)$, where $\eta$ is a positive Borel measure supported on $[-1,1]$ without a point mass at $x=1$. Then there exists a measurable function $h(x)$ with $|h(x)|=1$ a.e.~with respect to $\eta$, so that the set $S\cup \{h\}$ is orthogonal.
\end{lem}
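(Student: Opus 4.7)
Plan: The plan is to look for $h$ in the particularly simple form $h = \chi_A - \chi_{[-1,1]\setminus A}$ for a Borel set $A\subset[-1,1]$, since such an $h$ automatically satisfies $|h|\equiv 1$ and each orthogonality condition $\int h\,\overline{f_n}\,d\eta = 0$ reduces to the single balancing relation
\[
\int_A \overline{f_n}\,d\eta \;=\; \tfrac{1}{2}\int_{[-1,1]} \overline{f_n}\,d\eta, \qquad n = 1,\ldots,N.
\]
Phrased in terms of the $\mathbb{C}^N$-valued vector measure $\nu(B) := \bigl(\int_B \overline{f_n}\,d\eta\bigr)_{n=1}^{N}$, the task becomes showing that the midpoint $\tfrac{1}{2}\nu([-1,1])$ lies in the range of $\nu$. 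Since the range obviously contains both $\nu(\emptyset) = 0$ and $\nu([-1,1])$, it suffices to establish convexity of the range, which is Lyapunov's convexity theorem in the non-atomic case.

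To carry this out in the hands-on style suggested by the citation of Folland's Theorem 2.10, I would first approximate each $f_n$ monotonically by simple functions $\phi_n^{(k)}$ supported on a common refinement $\{E_j^{(k)}\}_{j=1}^{M_k}$. For simple functions the balancing problem decouples across pieces: it suffices to split each $E_j^{(k)}$ into two subsets of equal $\eta$-mass, for then $\int_{E_j^{(k)}} h\,d\eta = 0$ and consequently $\int h\,\overline{\phi_n^{(k)}}\,d\eta = 0$ for every $n$. Producing such a halving subset of $E_j^{(k)}$ follows from the intermediate value theorem applied to the monotone cumulative function $t \mapsto \eta\bigl((-1,t]\cap E_j^{(k)}\bigr)$; the hypothesis that $\eta$ carries no point mass at $x=1$ enters precisely here, guaranteeing right-continuity of this cumulative at the endpoint so that the IVT reaches the target value $\tfrac{1}{2}\eta(E_j^{(k)})$. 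Taking any weak-$*$ accumulation point in the unit ball of $L^\infty(\eta)$ of the resulting $h_{(k)} = \chi_{A^{(k)}} - \chi_{[-1,1]\setminus A^{(k)}}$ yields an $h_\infty$ with $|h_\infty|\le 1$ a.e.\ that satisfies the full orthogonality against the original $f_n$.

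The remaining step is to upgrade $|h_\infty|\le 1$ to $|h_\infty|=1$ $\eta$-a.e. For this I would use an extreme-point argument: the set $K := \{g\in L^\infty(\eta) : |g|\le 1,\ \langle g,f_n\rangle = 0\ \text{for all } n\}$ is convex and weak-$*$ compact, hence has an extreme point by Krein--Milman, and any such extreme point must have $|g|=1$ a.e., since otherwise one could add to $g$ a nonzero bounded measurable function supported on $\{|g|<1\}$ and orthogonal to each of the $N$ functionals $\langle\,\cdot\,,f_n\rangle$ (possible by a dimension count in the infinite-dimensional space $L^\infty(\{|g|<1\},\eta)$), contradicting extremality. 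The main obstacle I anticipate is a clean treatment of atoms of $\eta$ in $[-1,1)$: Lyapunov's theorem genuinely requires non-atomicity, and an atom cannot be halved within $\{\pm 1\}$-valued functions. The remedy is to allow $h$ to be complex-valued, so that on each atom $a$ the value $h(a)\in S^1$ varies over a continuous circle rather than two discrete points; this restores the convexity needed for Lyapunov and completes the construction.
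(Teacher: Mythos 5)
Your plan is correct in the non-atomic setting (which is the only setting in which the lemma is actually invoked, and essentially the only one in which it is true), but it takes a genuinely different route from the paper. The paper's argument is entirely elementary: it approximates each $f_n$ by simple functions constant on a common refining partition of $[-1,1)$ into half-open intervals, defines $h_m$ to be $+1$ on the right half and $-1$ on the left half of each interval, and then passes to the limit by dominated convergence, asserting that the $h_m$ converge to a unimodular $h$. Your reduction to the range of the $\CC^N$-valued measure $\nu$, the appeal to Lyapunov convexity, and above all the Krein--Milman extreme-point upgrade from $|h_\infty|\le 1$ to $|h_\infty|=1$ have no counterpart in the paper. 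What your route buys is robustness at exactly the point where the elementary argument is thinnest: a weak-$*$ accumulation point of $\pm 1$-valued functions need only satisfy $|h_\infty|\le 1$ (alternating-sign functions on ever finer partitions behave like Rademacher functions and tend weak-$*$ to $0$), so some additional mechanism is required to recover unimodularity, and your extreme-point argument supplies one --- indeed, since $0\in K$, that argument by itself already proves the lemma and makes the Lyapunov/approximation half of your plan redundant. Two caveats. First, halving each partition piece by the intermediate value theorem requires continuity of $t\mapsto\eta((-1,t]\cap E)$, i.e.\ non-atomicity of $\eta$ on all of $[-1,1)$, not merely the absence of an atom at $x=1$; you notice this later, but the role you assign to the endpoint hypothesis is not quite right. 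Second, your proposed remedy for atoms (letting $h$ range over the unit circle on each atom) cannot work in full generality, because the statement itself fails for atomic measures: for $\eta=\delta_0$ and $f_1\equiv\ID$ the space $L^2(\eta)$ is one-dimensional and no unimodular $h$ can be orthogonal to $f_1$. Neither caveat matters for the application in Proposition \ref{p-f2} and Corollary \ref{c-morevectors}, where $\eta$ is absolutely continuous.
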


\begin{proof}
Without loss of generality, we consider the positive parts of each $f_n$, written as
$f_n^+(x):=\max\{f_n(x), 0\}$. Let $\{g_m^n\}_{m\in\NN}$ be the sequence of simple functions in standard representation which approximates $f_n^+$ pointwise and uniformly (wherever $f_n^+$ is bounded). \\ 
\indent Let $E_m^n$ denote the partition of $[-1,1)$ on which $g_m^n$ is constant. For $n=1, \dots, N$, take the union of the endpoints of $E_m^n$ and cover $[-1,1)$ by non-overlapping half-open intervals corresponding to this union. Denote the collection of these intervals by $I_m$. Then, for each fixed $m$, $g_m^n$, $n=1, \dots, N$, is constant on each half-open interval $I\subset I_m$.\\ 
\indent For each $I\subset I_m$ define 
\[ 
h_m|\ci{I}=
	\begin{cases} 
      0 & \text{on } [-1,1)\setminus I \\
      1 & \text{on the right half of } I \\
      -1 & \text{on the left half of } I
   \end{cases}
\]
and $h_m:=\sum_I h_m|_I$. This gives us that $\left<g_m^n,h_m\right>=0$, $\forall m,n$, and $h_m$ converges with respect to $\eta$ to some measurable $h$ with $|h(x)|=1$ on $[-1,1)$.\\ 
\indent All that remains to show is that $\left<f_n,h\right>=0$, $\forall n$. This follows by a simple application of the Dominated Convergence Theorem to the functions $g_m^n(x)$ and $h(x)$:
\begin{align*}
\left<f_n,h\right> =\int_{-1}^{1}\lim_{m\to\infty}g_m^n(x)h_m(x)d\eta(x) 
=\lim_{m\to\infty}\int_{-1}^{1}g_m^n(x)h_m(x)d\eta(x) 
=0 
\end{align*}
for all $n$.
\end{proof}


\end{document}